\documentclass[a4paper,USenglish,cleveref, autoref, thm-restate]{lipics-v2021}

\usepackage{bm}
\usepackage{array}     
\usepackage{booktabs} 
\usepackage{longtable} 
\usepackage{pdflscape} 
\usepackage{todonotes}
\usepackage{csquotes}
\usepackage{amsmath}
\usepackage{xparse}    
\usepackage{xspace}
\usepackage{mathtools}
\usepackage{upgreek}
\usepackage[inline]{enumitem}

\NewDocumentCommand{\mathOrText}{m} { \ensuremath{#1}\xspace}

\makeatletter
\def\IfEmptyTF#1%
{%
	\if\relax\detokenize{#1}\relax%
	\expandafter\@firstoftwo%
	\else%
	\expandafter\@secondoftwo%
	\fi%
}
\makeatother

\NewDocumentCommand{\functionTemplate}{m m m m o}%
{%
	\IfNoValueTF{#5}%
	{%
		\mathOrText{#1\left#2{#4}\right#3}%
	}%
	{%
		\mathOrText{#1#5#2{#4}#5#3}%
	}%
}

\DeclareDocumentCommand{\probabilisticFunctionTemplate}{m m O{} o}
{%
	\functionTemplate{#1}%
	{\lbrack}%
	{\rbrack}%
	{#2\IfEmptyTF{#3}{}{\ \IfNoValueTF{#4}{\left}{#4}\vert\ \vphantom{#2}#3\IfNoValueTF{#4}{\right.}{}}}%
	[#4]%
}

\newcommand*{\model}{Flip-Schelling-Process\xspace}
\newcommand*{\shortmodel}{FSP\xspace}
\newcommand*{\N}{\mathOrText{\mathbf{N}}}
\newcommand*{\R}{\mathOrText{\mathbf{R}}}
\newcommand*{\circlePi}{\mathOrText{\uppi}}

\newcommand*{\dist} {\mathOrText{\mathrm{dist}}}

\newcommand*{\Bin} {\mathOrText{\mathrm{Bin}}}
\newcommand*{\Multi} {\mathOrText{\mathrm{Multi}}}

\RenewDocumentCommand{\Pr}{m O{} o} {\probabilisticFunctionTemplate{\mathrm{Pr}}{#1}[#2][#3]}

\NewDocumentCommand{\E}{m O{} o} {\probabilisticFunctionTemplate{\mathrm{E}}{#1}[#2][#3]}
\NewDocumentCommand{\Var}{m O{} o} {\probabilisticFunctionTemplate{\mathrm{Var}}{#1}[#2][#3]}

\DeclareDocumentCommand{\bigO}{m o} {\functionTemplate{\mathrm{O}}{(}{)}{#1}[#2]}
\DeclareDocumentCommand{\smallO}{m o} {\functionTemplate{\mathrm{o}}{(}{)}{#1}[#2]}
\DeclareDocumentCommand{\bigTheta}{m o} {\functionTemplate{\upTheta}{(}{)}{#1}[#2]}
\DeclareDocumentCommand{\bigOmega}{m o} {\functionTemplate{\upOmega}{(}{)}{#1}[#2]}
\DeclareDocumentCommand{\smallOmega}{m o} {\functionTemplate{\upomega}{(}{)}{#1}[#2]}
\DeclareDocumentCommand{\eulerE}{o} {\mathOrText{\mathrm{e}\IfNoValueTF{#1}{}{^{#1}}}}
\DeclareDocumentCommand{\d}{o} {\mathOrText{\mathop{}\!\mathrm{d}\IfNoValueTF{#1}{}{^{#1}}}}

\DeclareMathOperator{\e}{\mathrm{e}}
\DeclareMathOperator{\sgn}{\mathrm{sgn}}

\newcommand{\uandv}{{u \cap v}}

\newcommand{\unov}{{u \setminus v}}
\newcommand{\vnou}{{v \setminus u}}
\newcommand{\nouv}{{\overline{u \cup v}}}
\newcommand{\avdegree}{\overline{\delta}}

\newcommand*{\monochrome}{M}
\newcommand*{\region}{R}
\newcommand*{\binomial}{B}
\newcommand*{\neighborhood}{N}
\newcommand*{\decisiveness}{D}

\bibliographystyle{plainurl}

\title{The Flip Schelling Process on Random Geometric and Erd{\H o}s--Rényi Graphs} 

\titlerunning{The Flip Schelling Process on Random Graphs} 

\author
	{Thomas Bläsius}
	{Karlsruhe Institute of Technology, Karslruhe, Germany \and \url{https://scale.iti.kit.edu/people/thomasblaesius}}
	{thomas.blaesius@kit.edu}
	{}
	{} 
	
\author
	{Tobias Friedrich}
	{Hasso Plattner Institute, University of Potsdam, Potsdam, Germany \and \url{https://hpi.de/friedrich/}}
	{tobias.friedrich@hpi.de}
	{0000-0003-0076-6308}
	{} 

\author
	{Martin~S. Krejca}
	{Sorbonne Université, CNRS, LIP6, France}
	{martin.krejca@lip6.fr}
	{0000-0002-1765-1219}
	{This work was supported by the Paris Île-de-France Region.} 

\author
	{Louise Molitor}
	{Hasso Plattner Institute, University of Potsdam, Potsdam, Germany \and \url{https://hpi.de/friedrich/people/louise-molitor.html}}
	{louise.molitororc@hpi.de}
	{0000-0002-9166-9927}
	{} 

\authorrunning{T. Bläsius, T. Friedrich, M.\,S. Krejca and L. Molitor} 

\Copyright{Thomas Bläsius, Tobias Friedrich, Martin~S. Krejca and Louise Molitor} 

\ccsdesc[100]{Theory of computation~Network formation} 
\ccsdesc[300]{Theory of computation~Random network models}

\keywords{Agent-based Model, Schelling Segregation, Spin System} 

\category{} 

\relatedversion{} 



\acknowledgements{We want to thank Thomas Sauerwald for the discussions on random walks.}



\EventEditors{Nikhil Bansal and James Worrell}
\EventNoEds{2}
\EventLongTitle{48th International Colloquium on Automata, Languages, and Programming (ICALP 2021)}
\EventShortTitle{ICALP 2021}
\EventAcronym{ICALP}
\EventYear{2021}
\EventDate{July 12--16, 2021}
\EventLocation{Glasgow, United Kingdom}
\EventLogo{}
\SeriesVolume{}
\ArticleNo{}

\hideLIPIcs
\nolinenumbers

\begin{document}

	\maketitle

\begin{abstract}
	Schelling's classical segregation model gives a coherent explanation for the wide-spread phenomenon of residential segregation. We consider an agent-based saturated open-city variant, the \model\ (\shortmodel), in which agents, placed on a graph, have one out of two types and, based on the predominant type in their neighborhood, decide whether to changes their types; similar to a new agent arriving as soon as another agent leaves the vertex.
	
	We investigate the probability that an edge $\{u,v\}$ is monochrome, i.e., that both vertices $u$ and~$v$ have the same type in the \shortmodel, and we provide a general framework for analyzing the influence of the underlying graph topology on residential segregation. In particular, for two adjacent vertices, we show that a highly decisive common neighborhood, i.e., a common neighborhood where the absolute value of the difference between the number of vertices with different types is high, supports segregation and moreover, that large common neighborhoods are more decisive. 
	
	As an application, we study the expected behavior of the \shortmodel on two common random graph models with and without geometry:
	(1) For random geometric graphs, we show that the existence of an edge $\{u,v\}$ makes a highly decisive common neighborhood for $u$ and $v$ more likely. Based on this, we prove the existence of a constant $c > 0$ such that the expected fraction of monochrome edges after the \shortmodel is at least $1/2 + c$.
	(2) For Erd{\H o}s--Rényi graphs we show that large common neighborhoods are unlikely and that the expected fraction of monochrome edges after the \shortmodel is at most $1/2 + \smallO{1}$. Our results indicate that the cluster structure of the underlying graph has a significant impact on the obtained segregation strength.
\end{abstract}

\newpage

\section{Introduction}
	Residential segregation is a well-known sociological phenomenon~\cite{White86} where different groups of people tend to separate into largely homogeneous neighborhoods. Studies, e.g.~\cite{Clark86}, show that individual preferences are the driving force behind present residential patterns and bear much to the explanatory weight. Local choices therefore lead to a global phenomenon~\cite{Schelling06}. A simple model for analyzing residential segregation was introduced by Schelling~\cite{Schelling69, Schelling06} in the 1970s. In his model, two types of agents, placed on a grid, act according to the following threshold behavior, with $\tau \in (0,1)$ as the \emph{intolerance threshold}: agents are \emph{content} with their current position on the grid if at least a $\tau$-fraction of neighbors is of their own type. Otherwise they are \emph{discontent} and want to move, either via swapping with another random discontent agent or via jumping to a vacant position. Schelling demonstrated by experiments that, starting from a uniform random distribution, the described process drifts towards strong segregation, even if agents are tolerant and agree to live in mixed neighborhoods, i.e., if $\tau \leq \frac12$. Many empirical studies have been conducted to investigate the influence of various parameters on the obtained segregation, see~\cite{BW07,IEE09,fossett1998simseg,Pan07,Rogers_2011}.
	
	On the theoretical side, Schelling's model started recently gaining traction within the algorithmic game theory and artificial intelligence communities~\cite{A+19,bil_et_al:LIPIcs:2020:12684,10.5555/3398761.3398794,CLM18,E+19,elkind19,KKV20}, with focus on core game theoretic questions, where agents strategically select locations. 
	
	Henry et al.~\cite{Henry8605} described a simple model of graph clustering	motivated by Schelling where they showed that segregated graphs always emerge. Variants of the random Schelling segregation process were analyzed by a line of work that showed that residential segregation occurs with high probability~\cite{BEL16,BEL14,Bhakta14,BIK12,BIK17,You98}. 
	
	We consider an agent-based model, called the \emph{\model\ (\shortmodel)}, which can be understood as the Schelling model in a \emph{saturated open city}. In contrast to \emph{closed cities}~\cite{BEL14,BIK12,BIK17,You98}, which require fixed populations, open cities \cite{BEL15,BEL16,Bhakta14,PhysRevE.81.066120} allow residents to move away. In saturated city models, also known as voter models~\cite{durrett1993,10.2307/2244893,Liggett1999}, vertices are not allowed to be unoccupied, hence, a new agent enters as soon as one agent vacates a vertex. In general, in voter models two types of agents are placed on a graph. An agent examines their neighbors and, if a certain threshold is of another type, they change their type. Also in this model segregation is visible. There is a line of work, mainly in physics, that studies the voting dynamics on several types of graphs~\cite{https://doi.org/10.1002/rsa.20281,PhysRevE.67.026104,LIMA20083503,PhysRevE.71.016123,PhysRevE.81.011133}.
	
 	In the \shortmodel, agents have binary types. An agent is content if the fraction of agents in their neighborhood with the same type is larger $\frac12$. Otherwise, if the fraction is smaller $\frac12$, an agent is discontent and is willing to flip their type to become content. If the fraction of same type agents in their neighborhood is exactly $\frac12$, an agent flips their type with probability $\frac12$. Starting from an initial configuration where the type of each agent is chosen uniformly at random, we investigate a simultaneous-move, one-shot process and bound the number of monochrome edges, which is a popular measurement for segregation strength~\cite{CR15,freeman1978segregation}.
 	
 	Close to our model is the work by Omidvar and Franceschetti~\cite{Omidvar2018, 10.1145/3188745.3188836}, who initiated an analysis of the size of monochrome regions in the so called \emph{Schelling Spin Systems}. Agents of two different types are placed on a grid~\cite{Omidvar2018} and a geometric graph~\cite{10.1145/3188745.3188836}, respectively. Then independent and identical Poisson clocks are assigned to all agents and, every time a clock rings, the state of the corresponding agent is flipped if and only if the agent is discontent w.r.t. a certain intolerance threshold $\tau$ regarding the neighborhood size. 
 	The model corresponds to the Ising model with zero temperature with Glauber dynamics~\cite{RevModPhys.81.591,Solomon2007}. 
	
	The commonly used underlying topology for modeling the residential areas are (toroidal) grid graphs~\cite{bil_et_al:LIPIcs:2020:12684,BIK17,Omidvar2018}, regular graphs~\cite{bil_et_al:LIPIcs:2020:12684,CLM18,E+19}, paths~\cite{bil_et_al:LIPIcs:2020:12684,KKV20}, cycles~\cite{BEL15,BEL18b,BEL14,BIK12,You98} and trees~\cite{A+19,bil_et_al:LIPIcs:2020:12684,elkind19,KKV20}.
	Considering the influence of the given topology that models the residential area regarding, e.g., the existence of stable states and convergence behavior leads to phenomena like non-existence of stable states~\cite{E+19,elkind19}, non-convergence to stable states~\cite{bil_et_al:LIPIcs:2020:12684,CLM18,E+19}, and high-mixing times~\cite{Bhakta14,Gerhold08}.
	
	To avoid such undesirable characteristics, we suggest to investigate \emph{random geometric graphs}~\cite{PenroseRandomGeometricGraphs}, like in~\cite{10.1145/3188745.3188836}. Random geometric graphs demonstrate, in contrast to other random graphs without geometry, such as \emph{Erd{\H o}s--Rényi graphs}~\cite{Erdos1959,gilbert1959}, community structures, i.e., densely connected clusters of vertices. An effect observed by simulating the \shortmodel\ is that the fraction of monochrome edges is significantly higher in random geometric graphs compared to Erd{\H o}s--Rényi graphs, where the fraction stays almost stable around $\frac12$, cf. Fig~\ref{fig:eval}.	
	
	\begin{figure}
    	\centering
        \small
		\input{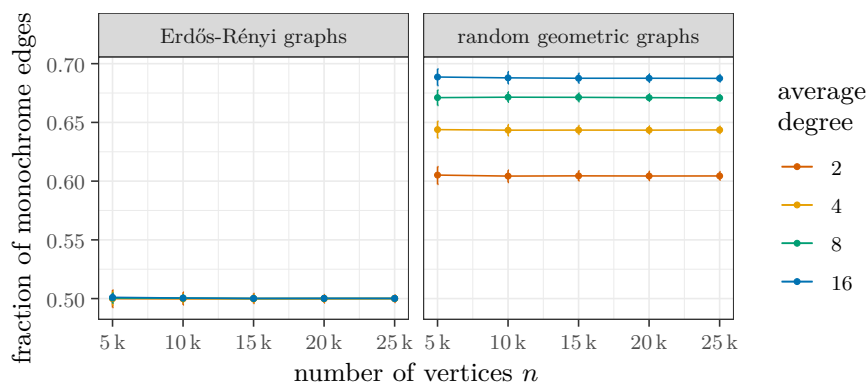}
		\caption{The fraction of monochrome edges after the Flip-Schelling-Process (\shortmodel) in Erd{\H o}s--Rényi graphs and random geometric graphs for different graph sizes (number of vertices~$n$) and different expected average degrees. Each data point
		shows the average over 1000 generated graphs with one simulation of the FSP per graph. The error bars show the interquartile range, i.e., $50\,\%$ of the measurements lie between the top and bottom end of the error bar.}
		 \label{fig:eval}
	\end{figure}

	We set out for rigorously proving this phenomenon. In particular, we prove for random geometric graphs that there exists a constant $c$ such that, given an edge $\{u,v\}$, the probability that $\{u,v\}$ is monochrome is lower-bounded by $\frac12 + c$, cf. \Cref{thm:final-result}. In contrast, we show for Erd{\H o}s--Rényi graphs that segregation is not likely to occur and the probability that $\{u,v\}$ is monochrome is upper-bounded by  $\frac12 + \smallO{1}$, cf. {\Cref{Thm:ER}.
	
	We introduce a general framework to deepen the understanding of the influence of the underlying given topology on residential segregation. To this end, we first show that a highly decisive common neighborhood supports segregation, cf.~\Cref{sec:monochrom-edges-via}. In particular, we provide a lower bound that an edge $\{u,v\}$ is monochrome based on the probability that the difference between the majority and the minority regarding both types in the common neighborhood, i.e., the number of agents which are adjacent to $u$ and~$v$, is larger compared to their exclusive neighborhoods, i.e., the number of agents which are adjacent to either $u$ or~$v$.
	Next, we show that large sets are more decisive, cf.~\Cref{sec:large-neighb-are}. This implies that a large common neighborhood, compared to the exclusive neighborhood, is likely to be more decisive, i.e., makes it more likely that the absolute value of the difference between the number of different types in the common neighborhood is larger than in the exclusive neighborhoods. These considerations hold for arbitrary graphs. Hence, we reduce the question concerning a lower bound for the fraction of monochrome edges in the \shortmodel\ to the probability that, given $\{u,v\}$, the common neighborhood of $u$ and $v$ is larger than the exclusive neighborhoods of $u$ and $v$, respectively.
	
	For random geometric graphs, we prove that a large geometric region, i.e., the intersecting region that are formed by intersecting disks, leads to a large vertex set, cf.~\Cref{subsec:neighborhood_region}, and that random geometric graphs have enough edges that have sufficiently large intersecting regions, cf.~\Cref{sec:many-edges-have}, such that segregation is likely to occur. In contrast, for Erd{\H o}s--Rényi graphs, we show that the common neighborhood between two vertices $u$ and $v$ is with high probability empty and therefore segregation is not likely to occur, cf.~\Cref{sec:erdosrenyi}. 
	
	Overall, we shed light on the influence of the structure of the underlying graph and discovered the significant impact of the community structure as an important factor on the obtained segregation strength.
	We reveal for random geometric graphs that already after one round a provable tendency is apparent and a strong segregation occurs.

\section{Model and Preliminaries}

Let $G = (V,E)$ be an unweighted and undirected graph, with vertex set $V$ and edge set $E$. For any vertex $v \in V$, we denote the \emph{neighborhood} of $v$ in $G$ by $N_v = \{u \in V\colon \{u,v\} \in E\}$ and the degree of $v$ in~$G$ by $\delta_v = |N_v|$.
We consider \emph{random geometric graphs} and \emph{Erd{\H o}s--Rényi graphs} with a total of  $n \in \N^+$ vertices and an \emph{expected average degree} $\avdegree > 0$. 

For a given $r \in \R^+$, a random geometric graph $G \sim \mathcal{G}(n, r)$ is obtained by distributing~$n$ vertices uniformly at random in some geometric ground space and connecting vertices $u$ and~$v$ if and only if $\dist(u, v) \le r$.  We use a two-dimensional toroidal Euclidean space with total area $1$ as ground space.  More formally, each vertex $v$ is assigned a point $(v_1, v_2) \in [0, 1]^2$ and 
the distance between $u = (u_1, u_2)$ and $v$ is $\dist(u, v) = \sqrt{|u_1 - v_1|_{\circ}^2 + |u_2 - v_2|_{\circ}^2}$  for $|u_i - v_i|_{\circ} = \min\{|u_i - v_i|, 1 - |u_i - v_i|\}$.  We note that using a torus instead of, e.g., a unit square, has the advantage that we do not have to consider edge cases, for vertices that are close to the boundary.  In fact, a disk of radius $r$ around any point has the same area $\circlePi r^2$.  As every vertex $v$ is connected to all vertices in the disk of radius $r$ around it, its expected average degree is $\avdegree = (n - 1) \circlePi r^2$.
	
For a given $p \in [0,1]$, let $\mathcal{G}(n,p)$ denote an Erd{\H o}s--Rényi graph. Each edge $\{u,v\}$ is included with probability $p$, independently from every other edge. It holds that $\avdegree = (n-1)p$.

Consider two different vertices $u$ and $v$. Let $N_\uandv \coloneqq |N_u \cap N_v|$ be the number of vertices in the \emph{common neighborhood}, let $N_\unov \coloneqq |N_u \setminus N_v|$ be the number of vertices in the \emph{exclusive neighborhood} of $u$, and let $N_\vnou \coloneqq  |N_v \setminus N_u|$ be the number of vertices in the exclusive neighborhood of $v$. Furthermore, with~$N_\nouv \coloneqq |V \setminus (N_u \cup N_v)|$, we denote the number of vertices that are neither adjacent to $u$ nor to $v$. 

Let $G$ be a graph where each vertex represents an agent of type $t^+$ or $t^-$.  The type of each agent is chosen independently and uniformly at random.  An edge $\{u,v\}$ \emph{monochrome} if and only if $u$ and $v$ are of the same type. The \emph{\model} (\shortmodel) is defined as follows: an agent $v$ whose type is aligned with the type of more than $\delta_v/2$ of their neighbors keeps their type. If more than $\delta_v/2$ neighbors have a different type, then agent $v$ changes their type. In case of a tie, i.e., if exactly $\delta_v/2$ neighbors have a different type, then $v$ changes their type with probability $\frac12$. \shortmodel is a simultaneous-move, one-shot process, i.e., all agents make their decision at the same time and, moreover, only once. 

For $x,y \in \N$, we define $[x..y] = [x,y] \cap \N$ and for $x \in \N$, we define $[x] = [1..x]$.

\subsection{Useful Technical Lemmas}

In this section, we state several lemmas that we will use in order to prove our results in the next sections.

\begin{lemma}
	\label{lem:dominating_binomial_distribution}
	Let $X \sim \Bin(n, p)$ and $Y \sim \Bin(n, q)$ with $p \ge q$.
	Then $\Pr{X \ge Y} \ge \frac{1}{2}$.
\end{lemma}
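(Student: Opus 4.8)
The plan is to prove the inequality by a monotone coupling of the two binomials, followed by a symmetry argument; here $X$ and $Y$ are taken to be independent (as is implicit in the statement, and as holds in all applications, where the counts range over disjoint vertex sets). Since $\Pr{X \ge Y}$ depends only on the joint law of $(X,Y)$, we may freely replace $X$ by any variable with the same marginal distribution that is still independent of $Y$.

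\textbf{Step 1: monotone coupling of the two binomials.} Let $U_1, \dots, U_n$ be i.i.d.\ uniform on $[0,1]$ and independent of $Y$, and set $X' \coloneqq \sum_{i=1}^n \mathbf{1}[U_i \le p]$ and $Z \coloneqq \sum_{i=1}^n \mathbf{1}[U_i \le q]$. Then $X' \sim \Bin(n,p)$ and $Z \sim \Bin(n,q)$, and since $p \ge q$ we have $\mathbf{1}[U_i \le p] \ge \mathbf{1}[U_i \le q]$ for every $i$, so $X' \ge Z$ holds surely. Consequently $\{Z \ge Y\} \subseteq \{X' \ge Y\}$, whence $\Pr{X' \ge Y} \ge \Pr{Z \ge Y}$. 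As $(X',Y)$ has the same law as $(X,Y)$ --- independent coordinates with distributions $\Bin(n,p)$ and $\Bin(n,q)$ --- this gives $\Pr{X \ge Y} = \Pr{X' \ge Y} \ge \Pr{Z \ge Y}$.

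\textbf{Step 2: symmetry for equal parameters.} The variables $Z$ and $Y$ are independent and both distributed as $\Bin(n,q)$, hence exchangeable, so $\Pr{Z \ge Y} = \Pr{Y \ge Z}$. Since for any two real numbers at least one of $Z \ge Y$ and $Y \ge Z$ holds, $\Pr{Z \ge Y} + \Pr{Y \ge Z} \ge 1$, and therefore $\Pr{Z \ge Y} \ge \tfrac12$. Combined with Step 1 this yields $\Pr{X \ge Y} \ge \tfrac12$, as claimed.

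I do not expect any genuine obstacle here; the only spot requiring a line of justification is the reduction in Step 1 --- that replacing $X$ by the coupled $X'$ leaves $\Pr{X \ge Y}$ unchanged --- which holds precisely because $X$ and $Y$ are independent, and everything else is elementary. (An equivalent route avoids the explicit coupling by noting that $\Bin(n,p)$ stochastically dominates $\Bin(n,q)$, so $\Pr{X \ge k} \ge \Pr{Y' \ge k}$ for all $k$ with $Y'$ an independent copy of $Y$, and then conditioning on the value of $Y$ to reduce to the equal-parameter symmetry of Step 2; the uniform coupling above is, however, the most transparent.)
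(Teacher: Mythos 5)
Your proof is correct and follows essentially the same route as the paper: a monotone coupling to reduce to the equal-parameter case, followed by the symmetry argument $\Pr{Z \ge Y} + \Pr{Y \ge Z} \ge 1$. The only cosmetic difference is the direction of the coupling (you pull the comparison down to two $\Bin(n,q)$ variables via uniform thresholds, while the paper promotes $Y$ to a $\Bin(n,p)$ variable $Y' \ge Y$ by randomized upgrading of its Bernoulli trials), and you make explicit the independence assumption that the paper uses implicitly.
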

\begin{proof}
	Let $Y_1, \dots, Y_n$ be the individual Bernoulli trials for $Y$,
	i.e., $Y = \sum_{i \in [n]} Y_i$.  Define new random variables
	$Y_1', \dots, Y_n'$ such that $Y_i = 1$ implies $Y_i' = 1$ and if
	$Y_i = 0$, then $Y_i' = 1$ with probability $(p - q) / (1 - q)$ and $Y_i' = 0$
	otherwise.  Note that for each individual $Y_i'$,
	we have $Y_i' = 1$ with probability $p$, i.e.,
	$Y' = \sum_{i \in [n]} Y_i' \sim \Bin(n, p)$.  Moreover, as
	$Y' \ge Y$ for every outcome, we have
	$\Pr{X \ge Y} \ge \Pr{X \ge Y'}$.  It remains to show that
	$\Pr{X \ge Y'} \ge \frac{1}{2}$.
	
	As $X$ and $Y'$ are equally distributed, we have
	$\Pr{X \ge Y'} = \Pr{X \le Y'}$.  Moreover, as one of the two
	inequalities holds in any event, we get
	$\Pr{X \ge Y'} + \Pr{X \le Y'} \ge 1$, and thus equivalently
	$2\Pr{X \ge Y'} \ge 1$, which proves the claim.
\end{proof}

\begin{lemma}
	\label{lem:maximum_of_a_binomial_distribution}
	Let $n \in \N^+$, $p \in [0, 1)$, and let $X \sim \Bin(n, p)$.
	Then, for all $i \in [0.. n] $, it holds that $\Pr{X = i} \leq \Pr{X = \lfloor p(n + 1) \rfloor}$.
\end{lemma}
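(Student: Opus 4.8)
The plan is to control the shape of the probability mass function by examining the ratio of consecutive values. For $i \in [1..n]$, since $p < 1$ we may write
\[
  \frac{\Pr{X = i}}{\Pr{X = i - 1}}
  = \frac{\binom{n}{i}\, p^{i}\,(1-p)^{n-i}}{\binom{n}{i-1}\, p^{i-1}\,(1-p)^{n-i+1}}
  = \frac{(n - i + 1)\,p}{i\,(1 - p)} .
\]
A one-line rearrangement (clear denominators and cancel) shows that this ratio is at least $1$ exactly when $i \le p(n+1)$, and strictly less than $1$ exactly when $i > p(n+1)$. Hence, writing $m \coloneqq \lfloor p(n+1) \rfloor$, the sequence $\Pr{X = 0}, \Pr{X = 1}, \dots, \Pr{X = n}$ is non-decreasing up to index $m$ and strictly decreasing from index $m$ onward; here we use that $m + 1 > p(n+1)$ always holds, since $\lfloor x \rfloor + 1 > x$ for every real $x$, which also disposes of the boundary case where $p(n+1)$ happens to be an integer.

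Concretely, I would argue in two directions. For $i \in [0..m]$, chaining the inequality $\Pr{X = j} \ge \Pr{X = j - 1}$ over $j \in [i+1 .. m]$ (each such $j$ satisfies $j \le m \le p(n+1)$, so the ratio above is $\ge 1$) gives $\Pr{X = i} \le \Pr{X = m}$. For $i \in [m+1 .. n]$, chaining $\Pr{X = j} \le \Pr{X = j - 1}$ over $j \in [m+1 .. i]$ (each such $j$ satisfies $j \ge m + 1 > p(n+1)$, so the ratio is $< 1$) gives $\Pr{X = i} \le \Pr{X = m}$. Together these two cases cover all $i \in [0..n]$, and $m = \lfloor p(n+1)\rfloor$ indeed lies in $[0..n]$ because $0 \le p(n+1) < n+1$. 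Since $\Pr{X = i} \le \Pr{X = m}$ for all $i$, this is exactly the claim.

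The argument is essentially routine, and there is no genuine obstacle beyond bookkeeping the degenerate situations. The two points worth stating carefully are: (i) the consecutive-ratio identity requires $p < 1$ (which is assumed) and becomes vacuous when $p = 0$, in which case the claim is immediate since only $i = 0$ carries positive mass and $m = 0$; and (ii) the strict inequality $m + 1 > p(n+1)$, which is precisely what makes the single index $m$ a valid maximizer rather than forcing us to also name $m+1$ when $p(n+1) \in \N$.
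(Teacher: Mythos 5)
Your proof is correct and follows essentially the same route as the paper: both arguments compare consecutive values of the probability mass function (you via the ratio $\frac{(n-i+1)p}{i(1-p)}$, the paper via the sign of the difference), derive the same threshold $i \le p(n+1)$, and conclude unimodality with the maximum at $\lfloor p(n+1) \rfloor$. Your handling of the boundary cases ($p=0$ and $p(n+1)$ integral) is careful and matches the paper's conclusion.
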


\begin{proof}
	We interpret the distribution of~$X$ as a finite series and consider the sign of the differences of two neighboring terms.
	A maximum of the distribution of~$X$ is located at the position at which the difference switches from positive to negative.
	To this end, let $b\colon [0, n - 1] \to [-1, 1]$ be defined such that, for all $i \in [0, n - 1] \cap \N$, it holds that
	\[
	b(d) = \binom{n}{d + 1} p^{d + 1} (1 - p)^{n - d - 1} - \binom{n}{d} p^d (1 - p)^{n - d}.
	\]
	We are interested in the sign of~$b$.
	In more detail, for any $d \in [0, n - 2] \cap \N$, if $\sgn\big(b(d)\big) \geq 0$ and $\sgn\big(b(d + 1)\big) \leq 0$, then $d + 1$ is a local maximum.
	If the sign is always negative, then there is a global maximum in the distribution of~$X$ at position~$0$.
	
	In order to determine the sign of~$b$, for all $i \in [0.. n - 1] $, we rewrite
	\begin{align*}
		b(i) &= \frac{n!}{i! (n - i - 1)!} p^i (1 - p)^{n - i - 1} \frac{p}{i + 1} - \frac{n!}{i! (n - i - 1)!} p^d (1 - p)^{n - i - 1} \frac{1 - p}{n - i}\\
		&= \frac{n!}{i! (n - i - 1)!} p^i (1 - p)^{n - i - 1} \left(\frac{p}{i + 1} - \frac{1 - p}{n - i}\right).
	\end{align*}
	Since the term $n! p^i (1 - p)^{n - i - 1}$ is always non-negative, the sign of $b(i)$ is determined by the sign of $p/(i + 1) - (1 - p)/(n - i)$.
	Solving for~$i$, we get that
	\[
		\frac{p}{i + 1} - \frac{1 - p}{n - i} \geq 0 \Leftrightarrow i \leq p(n + 1) - 1.
	\]
	Note that $p(n + 1) - 1$ may not be integer.
	Further note that the distribution of~$X$ is unimodal, as the sign of~$b$ changes at most once.
	Thus, each local maximum is also a global maximum.
	As discussed above, the largest value $d \in [0, n - 2] \cap \N$ such that $\sgn\big(b(d)\big) \geq 0$ and $\sgn\big(b(d + 1)\big) \leq 0$ then results in a global maximum at position $d + 1$.
	Since~$d$ needs to be integer, the largest value that satisfies this constraint is $\lfloor p(n + 1) - 1 \rfloor$.
	If the sign of~$b$ is always negative ($p \leq 1/(n + 1)$), then the distribution of~$X$ has a global maximum at~$0$, which is also satisfied by $\lfloor p(n + 1) - 1 \rfloor + 1$, which concludes the proof.
\end{proof}

\begin{theorem}[Stirling's Formula~{\cite[page~54]{FellerAnIntroductionToProbabilityTheory}}]
	\label{thm:stirling_approximation}
	For all $n \in \N^+$, it holds that
	\[
		\sqrt{2 \circlePi} n^{n+1/2}\e^{-n} \cdot \e^{\left(12n+1\right)^{-1}} < n! < \sqrt{2 \circlePi} n^{n+1/2}\e^{-n} \cdot \e^{\left(12n\right)^{-1}}.
	\]
\end{theorem}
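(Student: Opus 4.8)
\subparagraph*{Proof idea.}
The statement is the quantitative form of Stirling's formula, and rather than merely cite Feller I sketch the self-contained Robbins-style argument. The plan is to put $d_n = \ln(n!) - (n + \tfrac12)\ln n + n$, so that $n! = \e^{d_n}\, n^{n+1/2}\e^{-n}$ and the claim becomes $\tfrac{1}{12n+1} < d_n - \ln\sqrt{2\circlePi} < \tfrac{1}{12n}$. First I would compute the telescoping difference
\[
	d_n - d_{n+1} = \left(n + \tfrac12\right)\ln\frac{n+1}{n} - 1
\]
and expand it with the series $\ln\frac{1+t}{1-t} = 2\sum_{j \ge 0}\frac{t^{2j+1}}{2j+1}$ evaluated at $t = \frac{1}{2n+1}$ (which satisfies $\frac{1+t}{1-t} = \frac{n+1}{n}$ and $2(n+\tfrac12) = 1/t$); this collapses to
\[
	d_n - d_{n+1} = \sum_{j \ge 1}\frac{1}{(2j+1)(2n+1)^{2j}} > 0 ,
\]
so $(d_n)_{n \ge 1}$ is strictly decreasing.

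Next I would sandwich this increment. Comparing the series with a geometric one gives $d_n - d_{n+1} < \tfrac{1}{3}\sum_{j \ge 1}(2n+1)^{-2j} = \tfrac{1}{12n(n+1)} = \tfrac{1}{12}\bigl(\tfrac1n - \tfrac1{n+1}\bigr)$, so $d_n - \tfrac{1}{12n}$ is increasing; keeping only the first term, $d_n - d_{n+1} > \tfrac{1}{3(2n+1)^2}$, and a one-line computation (which reduces to the inequality $24n > 23$) shows this exceeds $\tfrac{1}{12n+1} - \tfrac{1}{12(n+1)+1}$, so $d_n - \tfrac{1}{12n+1}$ is decreasing. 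The three monotone bounded sequences $d_n$, $d_n - \tfrac{1}{12n}$, and $d_n - \tfrac{1}{12n+1}$ share a common limit $C$, and monotonicity then yields $\tfrac{1}{12n+1} < d_n - C < \tfrac{1}{12n}$ for all $n \ge 1$. Exponentiating gives exactly the asserted two-sided bound, but with $\e^C$ where $\sqrt{2\circlePi}$ should stand.

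It remains to identify $\e^C = \sqrt{2\circlePi}$, which I would do via the Wallis product $\tfrac{\circlePi}{2} = \lim_{n\to\infty}\tfrac{2^{4n}(n!)^4}{((2n)!)^2(2n+1)}$: substituting the now-established asymptotics $n! \sim \e^C n^{n+1/2}\e^{-n}$ and $(2n)! \sim \e^C(2n)^{2n+1/2}\e^{-2n}$, every power of $n$ and of $2$ cancels, the limit evaluates to a constant multiple of $\e^{2C}$, and equating it with $\circlePi/2$ forces $\e^C = \sqrt{2\circlePi}$. The only genuine work lies in the middle paragraph — in particular the lower-bound refinement toward $\tfrac{1}{12n+1}$, which is the one place where naive tail-bounding of the series is not quite enough; everything else is routine. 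Of course, one may equally well simply invoke \cite[page~54]{FellerAnIntroductionToProbabilityTheory}.
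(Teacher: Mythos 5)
Your sketch is correct, but note that the paper does not prove this statement at all: it is imported verbatim from Feller's book, with the citation serving as the entire ``proof.'' What you give instead is the classical Robbins-style derivation, and the details check out. The telescoping identity $d_n - d_{n+1} = (n+\tfrac12)\ln\tfrac{n+1}{n} - 1$ together with the expansion at $t = \tfrac{1}{2n+1}$ does collapse to $\sum_{j \ge 1} \tfrac{1}{(2j+1)(2n+1)^{2j}}$; the geometric-series bound gives exactly $\tfrac{1}{12n(n+1)}$, so $d_n - \tfrac{1}{12n}$ increases; and your lower-bound comparison is right, since $(12n+1)(12n+13) - 36(2n+1)^2 = 24n - 23 > 0$ for $n \ge 1$, so $d_n - \tfrac{1}{12n+1}$ decreases and the strict two-sided bound $\tfrac{1}{12n+1} < d_n - C < \tfrac{1}{12n}$ follows from strict monotonicity toward the common limit. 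The Wallis-product identification also works: substituting the asymptotics into $\tfrac{2^{4n}(n!)^4}{((2n)!)^2(2n+1)}$ leaves $\e^{2C}/4$, forcing $\e^C = \sqrt{2\circlePi}$. What your approach buys is self-containedness (and it correctly isolates the only delicate point, the $\tfrac{1}{12n+1}$ refinement, which the naive tail bound does not give); what the paper's choice buys is brevity, since for the purposes of Corollary~\ref{cor:stirling_approximation} only the black-box inequality is needed. Either is acceptable here; if you wanted to include your argument in full, the series manipulations you only sketch would need to be written out, but no idea is missing.
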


\begin{corollary}
	\label{cor:stirling_approximation}
	For all $n \geq 2$ with $n \in \N$, it holds that
	\begin{align}
		\label{eq:stirling_inequality_lowerbound}
		n! &> \sqrt{2 \circlePi} n^{n+1/2}\e^{-n}   \textrm{ and}\\
		\label{eq:stirling_inequality_upperbound}
		n! &<\e n^{n+1/2}\e^{-n}.
	\end{align}
\end{corollary}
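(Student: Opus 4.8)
The plan is to obtain both inequalities directly from Stirling's Formula (\Cref{thm:stirling_approximation}) by replacing the correction factors $\e^{(12n+1)^{-1}}$ and $\e^{(12n)^{-1}}$ with explicit constant bounds, so that no integration or series manipulation is needed.

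For the lower bound \eqref{eq:stirling_inequality_lowerbound} I would simply note that $(12n+1)^{-1} > 0$ for every $n \in \N^+$, hence $\e^{(12n+1)^{-1}} > 1$, and therefore the left inequality of \Cref{thm:stirling_approximation} already gives $n! > \sqrt{2\circlePi}\, n^{n+1/2} \e^{-n}$. This part in fact holds for all $n \ge 1$; the restriction $n \ge 2$ is only needed for the upper bound.

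For the upper bound \eqref{eq:stirling_inequality_upperbound}, the right inequality of \Cref{thm:stirling_approximation} yields $n! < \sqrt{2\circlePi}\, n^{n+1/2}\e^{-n}\cdot \e^{(12n)^{-1}}$, so it suffices to show $\sqrt{2\circlePi}\,\e^{(12n)^{-1}} \le \e$ for all integers $n \ge 2$. Since $n \mapsto (12n)^{-1}$ is decreasing, the factor $\e^{(12n)^{-1}}$ is largest, over integers $n \ge 2$, at $n = 2$, where it equals $\e^{1/24}$; thus it is enough to verify the single constant inequality $\sqrt{2\circlePi}\,\e^{1/24} < \e$, equivalently $\tfrac12\ln(2\circlePi) + \tfrac1{24} < 1$. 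This follows from $\ln(2\circlePi) < 1.9$ (since $2\circlePi < 6.29 < \e^{1.9}$), which gives $\tfrac12\ln(2\circlePi) + \tfrac1{24} < 0.95 + 0.042 < 1$.

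The only real obstacle is the final numerical check $\sqrt{2\circlePi}\,\e^{1/24} < \e$; everything else is an immediate consequence of \Cref{thm:stirling_approximation}. Note that this constant inequality fails if one replaces $\e^{1/24}$ by $\e^{1/12}$ (the $n=1$ case), which is exactly why the statement is restricted to $n \ge 2$.
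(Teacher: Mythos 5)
Your proposal is correct and follows essentially the same route as the paper: the lower bound comes from dropping the factor $\e^{(12n+1)^{-1}} > 1$ in Stirling's Formula, and the upper bound from the monotonicity of $\e^{(12n)^{-1}}$ in $n$, reducing to the single check $\sqrt{2\circlePi}\,\e^{1/24} < \e$ at $n = 2$ (which the paper verifies numerically and you verify via logarithms). No gaps.
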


\begin{proof}
	For both inequalities, we aim at using \Cref{thm:stirling_approximation}.
	
	\noindent \cref{eq:stirling_inequality_lowerbound}: Note that $\e^{\left(12n+1\right)^{-1}} > 1$, since $\frac{1}{12n+1} > 0$. Hence,
	\[
	\sqrt{2 \circlePi} n^{n+1/2}\e^{-n}   < \sqrt{2 \circlePi} n^{n+1/2}\e^{-n} \cdot \e^{\left(12n+1\right)^{-1}}.
	\]
	
	\noindent \cref{eq:stirling_inequality_upperbound}: We prove this case by showing that 
	\begin{align}
		\label{eq:proof_stirling_inequality_lowerbound}
		\sqrt{2 \circlePi} \e^{\left(12n\right)^{-1}} < \e.
	\end{align}
	Note, that $\e^{\left(12n\right)^{-1}}$ is strictly decreasing. Hence, we only have to check whether \cref{eq:proof_stirling_inequality_lowerbound} holds for $n = 2$.
	\[
	\sqrt{2 \circlePi} \e^{\left(12n\right)^{-1}} \leq \sqrt{2 \circlePi}  \e^{\frac{1}{24}} < 2.7 < \e. \qedhere
	\]
\end{proof}

\begin{lemma}
	\label{lem:obvious-observation-on-conditional-probabilities}
	Let $A$, $B$, and $C$ be random variables such that $\Pr{A > C \land B > C} > 0$ and $\Pr{A > C \land B \leq C} > 0$.  Then
	$\Pr{A > B \land A > C} \ge \Pr{A > B} \cdot \Pr{A>C}$.
\end{lemma}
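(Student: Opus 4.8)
The plan is to read the claimed inequality as a positive-correlation statement for the events $\{A > B\}$ and $\{A > C\}$ and to establish it by a short conditioning argument. By hypothesis $\Pr{A > C} \ge \Pr{A > C \land B > C} > 0$, so conditioning on $\{A > C\}$ is legitimate; and if $\Pr{A > C} = 1$ the claim is immediate, since then $\Pr{A > B \land A > C} = \Pr{A > B} = \Pr{A > B} \cdot \Pr{A > C}$. So assume $0 < \Pr{A > C} < 1$; dividing the target inequality by $\Pr{A > C}$, it suffices to prove that
\[
	\Pr{A > B \mid A > C} \ge \Pr{A > B}.
\]

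The key step (which uses the independence of $A$, $B$, and $C$, as is available in our applications, where they arise from the types of pairwise disjoint vertex sets) is that conditioning on $\{A > C\}$ can only move $A$ upward in the stochastic order: for every real threshold $t$,
\[
	\Pr{A > t \mid A > C} \ge \Pr{A > t}.
\]
I would prove this by conditioning on $C$. By independence of $A$ and $C$, $\Pr{A > t \land A > C} = \E{\Pr{A > \max\{t, C\}}}$, and for each fixed value $c$ the event $\{A > \max\{t, c\}\}$ is the smaller of the two nested half-lines $\{A > t\}$ and $\{A > c\}$, so $\Pr{A > \max\{t, c\}} = \min\{\Pr{A > t}, \Pr{A > c}\} \ge \Pr{A > t} \cdot \Pr{A > c}$ by the elementary inequality $\min\{x, y\} \ge xy$ valid for $x, y \in [0, 1]$. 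Taking the expectation over $C$ yields $\Pr{A > t \land A > C} \ge \Pr{A > t} \cdot \Pr{A > C}$, which is the displayed bound after dividing by $\Pr{A > C}$.

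To conclude, I would average over $B$. Because $B$ is independent of the pair $(A, C)$, its distribution is unchanged by conditioning on $\{A > C\}$, so applying the stochastic-dominance bound with threshold $t$ and integrating over the law of $B$ gives $\Pr{A > B \mid A > C} \ge \Pr{A > B}$; multiplying back by $\Pr{A > C}$ produces the claim.

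I do not anticipate a real obstacle here — the work lies entirely in isolating the right quantity. The two places to be careful are the tie handling in the identity $\Pr{A > \max\{t, c\}} = \min\{\Pr{A > t}, \Pr{A > c}\}$ (which holds because one of the two half-lines always contains the other, regardless of whether $t \le c$) and the degenerate cases, where the positivity hypothesis $\Pr{A > C \land B > C} > 0$ is exactly what ensures $\Pr{A > C} > 0$, while $\Pr{A > C} = 1$ is handled trivially. A fully equivalent alternative that avoids the intermediate step is to condition directly on the value of $A$: by independence $\Pr{A > B \land A > C \mid A = a} = \Pr{B < a} \cdot \Pr{C < a}$, and since both $a \mapsto \Pr{B < a}$ and $a \mapsto \Pr{C < a}$ are non-decreasing, the inequality is precisely Chebyshev's sum inequality for these two monotone functions of the single random variable $A$.
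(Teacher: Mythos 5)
Your argument is sound, but only under a hypothesis the lemma does not state, namely that $A$, $B$, $C$ are mutually independent, and this also makes it a genuinely different route from the paper's. The paper's proof never invokes independence: it writes $\Pr{A>B \wedge A>C} = \Pr{A>B}[A>C]\cdot\Pr{A>C}$, splits $\Pr{A>B}[A>C]$ along the events $B>C$ and $B\le C$, uses that $A>C \wedge B\le C$ forces $A>B$, and then lower-bounds the resulting conditional probabilities; the two positivity hypotheses serve only to make these conditionings well defined. Your route instead proves the dominance statement $\Pr{A>t}[A>C]\ge\Pr{A>t}$ via $\min\{x,y\}\ge xy$ and then averages over $B$ (equivalently, Chebyshev's association inequality after conditioning on $A$). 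This is shorter and more transparent, but it uses $A\perp C$ in the dominance step and $B\perp(A,C)$ in the averaging step, so as written it establishes a restricted version of the stated lemma.

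The restriction is substantive, but it cuts both ways. Some assumption of this kind is in fact necessary: with three equiprobable outcomes $(A,B,C)=(1,0,2),\,(1,2,0),\,(2,0,1)$, both positivity hypotheses hold, yet $\Pr{A>B \wedge A>C}=\tfrac13 < \tfrac49 = \Pr{A>B}\cdot\Pr{A>C}$. Correspondingly, the step $\Pr{B\le C}[A>C]\ge\Pr{A>B \wedge B\le C}$ in the paper's proof is not justified in general, since $A>C \wedge B\le C$ implies $A>B \wedge B\le C$ and not conversely, so the containment goes the wrong way. In every place the lemma is actually invoked, the three quantities involved are independent (neighborhood sizes, respectively decisiveness values, of disjoint vertex sets, after the relevant conditioning) or one of them is a deterministic threshold, so your argument covers all applications; you should, however, state the independence hypothesis explicitly in the lemma rather than leave it implicit.
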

\begin{proof}
	Using the definition of conditional probability, we obtain
	\[
		\Pr{A > B \land A > C} = \Pr{A>B}[A>C] \cdot \Pr{A>C}.
	\]
	Hence, we are left with bounding $\Pr{A > B}[A > C] \ge \Pr{A > B}$.
	Partitioning the sample space into the two events $B > C$ and
	$B \le C$ and using the law of total probability, we obtain
	\begin{align*}
		\Pr{A > B \mid A > C}
		=\,& \Pr{B > C}[A > C] \cdot \Pr{A > B \mid A > C \wedge B > C} \\
		+\,& \Pr{B \le C}[A > C] \cdot \Pr{A > B \mid A > C \wedge B \le C}.
	\end{align*}
	Note that the condition $A > C \wedge B \le C$ already implies
	$A > B$ and thus the last probability equals to~$1$.  Moreover,
	using the definition of conditional probability, we obtain
	\begin{align*}
		\Pr{A > B \mid A > C}
		=\ &\Pr{B > C}[A > C] \cdot \frac{\Pr{A > B \wedge A > C \wedge B > C}}{\Pr{A > C \wedge B > C}}\\
		&+ \Pr{B \le C}[A > C].
	\end{align*}
	Using that $\Pr{B > C}[A > C] \ge \Pr{A > C \wedge B > C}$, that
	$A > B \wedge B > C$ already implies $A > C$, that $\Pr{B \le C}[A > C] \ge
	\Pr{A > B \wedge B \le C}$, and finally the law of total
	probability, we obtain
	\begin{align*}
		\Pr{A > B \mid A > C}
		&\ge \Pr{A > B \wedge A > C \wedge B > C} + \Pr{B \le C}[A > C]\\
		&= \Pr{A > B \wedge B > C} + \Pr{B \le C}[A > C]\\
		&\ge \Pr{A > B \wedge B > C} + \Pr{A > B \wedge B \le C}\\
		&=\Pr{A > B}.   \qedhere
	\end{align*}
\end{proof}

\section{Monochrome Edges in Geometric Random Graphs}
\label{sec:monochr-edges-geom}

In this section, we prove the following main theorem.

\begin{theorem}
  \label{thm:final-result}
  Let $G \sim \mathcal{G}(n,r)$ be a random geometric graph with
  expected average degree $\avdegree = \smallO{\sqrt{n}}$.  The expected
  fraction of monochrome edges after the FSP is at least
  \begin{equation*}
    \frac{1}{2} +
    \frac{9}{800} \cdot
    \left(\frac{1}{2} - \frac{1}{\sqrt{2 \pi \lfloor \avdegree / 2\rfloor}}\right)^2 \cdot
    \left(1 - \eulerE^{-\avdegree/2} \left(1 + \frac{\avdegree}{2}\right)\right) \cdot
    (1 - \smallO{1}).
  \end{equation*}
\end{theorem}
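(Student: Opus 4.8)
The plan is to chain together the three modular results announced in the introduction. Fix two distinct vertices $u,v$. By linearity of expectation $\E{\#\text{monochrome edges}} = \E{|E|}\cdot\Pr{\{u,v\}\text{ monochrome}\mid\{u,v\}\in E}$, where the conditional probability is independent of the chosen pair by the symmetry of the torus; and since $|E|$ concentrates around its polynomially large mean, the expected fraction of monochrome edges is at least $(1-\smallO{1})\,\Pr{\{u,v\}\text{ monochrome}\mid\{u,v\}\in E}$. So I would fix an edge $\{u,v\}$, condition on $\dist(u,v)\le r$, and lower-bound the conditional probability that $u$ and $v$ end up with the same type.

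The first step is the reduction of \Cref{sec:monochrom-edges-via}: writing $\decisiveness_S = \lvert\#\{t^+\text{ in }S\}-\#\{t^-\text{ in }S\}\rvert$ for the \emph{decisiveness} of a vertex set $S$, that section gives a bound of the form $\Pr{\{u,v\}\text{ monochrome}}\ge \tfrac12 + c_1\,\Pr{\decisiveness_\uandv>\decisiveness_\unov \wedge \decisiveness_\uandv>\decisiveness_\vnou}$ with an explicit absolute constant $c_1$, the intuition being that a common neighborhood whose majority dominates both exclusive neighborhoods pulls $u$ and $v$ to the same type. The second step uses \Cref{sec:large-neighb-are} (larger sets are more decisive): conditioning on a size vector with $N_\uandv\ge N_\unov$, $N_\uandv\ge N_\vnou$ and $N_\uandv\ge 2$, \Cref{lem:obvious-observation-on-conditional-probabilities} (with $A=\decisiveness_\uandv$, $B=\decisiveness_\unov$, $C=\decisiveness_\vnou$) turns the conjunction into a product $\Pr{\decisiveness_\uandv>\decisiveness_\unov}\cdot\Pr{\decisiveness_\uandv>\decisiveness_\vnou}$, and monotonicity of decisiveness in the set size gives $\Pr{\decisiveness_\uandv>\decisiveness_\unov}\ge\Pr{\decisiveness_\uandv<\decisiveness_\unov}$, hence $\Pr{\decisiveness_\uandv>\decisiveness_\unov}\ge\tfrac12\bigl(1-\Pr{\decisiveness_\uandv=\decisiveness_\unov}\bigr)$. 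The tie probability $\Pr{\decisiveness_\uandv=\decisiveness_\unov}$ is in turn controlled by \Cref{lem:maximum_of_a_binomial_distribution} and Stirling's formula (\Cref{cor:stirling_approximation}), using that the common neighborhood has roughly $\lfloor\avdegree/2\rfloor$ vertices; together this produces the factor $\bigl(\tfrac12-\tfrac1{\sqrt{2\pi\lfloor\avdegree/2\rfloor}}\bigr)^2$. What remains is the geometric claim that, conditioned on $\dist(u,v)\le r$, with at least a fixed constant probability the common neighborhood has enough vertices and is larger than each exclusive neighborhood, and that this probability carries the factor $1-\eulerE^{-\avdegree/2}(1+\avdegree/2)$.

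For that last, geometric, step (\Cref{subsec:neighborhood_region,sec:many-edges-have}) I would argue as follows. Conditioned on $\dist(u,v)\le r$, the distance $d\coloneqq\dist(u,v)$ has density $2d/r^2$ on $[0,r]$; the common neighborhood gathers the vertices landing in the lens $N_u\cap N_v$ of area $a(d)$, while each of the two exclusive neighborhoods gathers those in a crescent of area $\tfrac12(\circlePi r^2-a(d))$. Below an absolute threshold $d^\ast r$ one has $a(d)\ge\tfrac12\circlePi r^2$, which happens with the fixed probability $(d^\ast)^2$; on this event each crescent has area at most $\tfrac14\circlePi r^2\le a(d)$, so \Cref{lem:dominating_binomial_distribution} lets $N_\uandv$ dominate each of $N_\unov,N_\vnou$, and $N_\uandv$ is binomial with mean $(1-\smallO{1})\avdegree/2$. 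Since $\avdegree=\smallO{\sqrt n}$, the lower tail of this binomial is, up to a $(1-\smallO{1})$ factor, the corresponding Poisson tail, so $\Pr{N_\uandv\ge 2}\ge\bigl(1-\eulerE^{-\avdegree/2}(1+\avdegree/2)\bigr)(1-\smallO{1})$, i.e.\ the probability that a mean-$\avdegree/2$ Poisson variable is at least $2$. Multiplying the constant from \Cref{sec:monochrom-edges-via} by the constant $(d^\ast)^2$ (and absorbing further slack) gives the numerical factor $\tfrac{9}{800}$, and collecting everything yields \Cref{thm:final-result}.

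The main obstacle I anticipate is this geometric step: one must pin down an explicit constant for ``the lens is at least half the disk'' (hence dominates both crescents) and, more delicately, control $N_\uandv$ from below in the sparse regime where $\avdegree$ is only a constant --- there is no concentration and one must use exact binomial/Poisson tails, which is exactly why the bound cannot be $1$ but only $1-\eulerE^{-\avdegree/2}(1+\avdegree/2)$. Secondary technicalities are checking that conditioning on adjacency and on small crescents can only increase $N_\uandv$ stochastically, handling the $\avdegree=\smallO{\sqrt n}$ hypothesis uniformly in all the $\smallO{1}$ terms, and verifying the ``larger sets are more decisive'' comparison exactly rather than heuristically.
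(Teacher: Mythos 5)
Your high-level architecture matches the paper's (decisiveness reduction, then size ordering of the common vs.\ exclusive neighborhoods, then the geometric lens argument and the Poisson-type tail for $N_\uandv \ge 2$), but you have misallocated the two key quantitative factors, and one of them is missing altogether. In the paper, the factor $\bigl(\tfrac12 - 1/\sqrt{2\circlePi\lfloor\avdegree/2\rfloor}\bigr)^2$ does \emph{not} come from the decisiveness ties: it is the price of the event $\{N_\uandv \ge N_\unov \wedge N_\uandv \ge N_\vnou\}$ itself, proved by first decoupling the multinomial region counts into independent binomials (\Cref{lem:multi_vs_binom}) and then bounding $\Pr{X > Y}$ for $X\sim\Bin(n',p)$, $Y\sim\Bin(n',q)$ with $p\ge q$ via the mode of the binomial and Stirling (\Cref{lem:equality_of_binomial_distributions}); the strict inequality is needed precisely because $v$ is always a neighbour of $u$, so one compares $N_\uandv$ with $N_\unov-1$. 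Your invocation of \Cref{lem:dominating_binomial_distribution} to say the lens count ``dominates'' the crescent counts only yields stochastic domination, i.e.\ $\Pr{X\ge Y}\ge\tfrac12$ per comparison; it does not make the ordering event free, and you never charge for it.

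Conversely, the step where you do place the factor $\bigl(\tfrac12 - 1/\sqrt{2\circlePi\lfloor\avdegree/2\rfloor}\bigr)^2$ --- the decisiveness comparison conditioned on ordered sizes --- cannot deliver it. Your argument ``the tie probability is small because the common neighborhood has roughly $\lfloor\avdegree/2\rfloor$ vertices'' presumes concentration of $N_\uandv$, which fails in the sparse regime $\avdegree = \bigTheta{1}$ that the theorem is meant to cover ($\avdegree\ge 2$): conditioned only on $N_\uandv\ge N_\unov$ and $N_\uandv\ge 2$, the neighborhoods can have constant size with constant probability, and then $\Pr{D_\uandv > D_\unov}$ is a fixed constant bounded away from $\tfrac12$ (the worst case $a=b=3$ gives exactly $\tfrac{3}{16}$). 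This is why the paper needs the small-case analysis of \Cref{thm:random-walk} (building on \Cref{lem:obvious-lemma-rand-walk,lem:random_walk}) to get the uniform constant $\tfrac{3}{16}$ per comparison. Because you omit this factor and mis-derive the other, your final constant is never actually established: $\tfrac{9}{800} = \tfrac12\cdot\bigl(\tfrac45\bigr)^2\cdot\bigl(\tfrac{3}{16}\bigr)^2$, and the ``absorbing further slack'' in your last paragraph hides exactly the two ingredients your chain is missing.
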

Note that the bound in Theorem~\ref{thm:final-result} is bounded away
from $\frac{1}{2}$ for all $\avdegree \ge 2$.  Moreover, the two
factors depending on $\avdegree$ go to $\frac{1}{2}$ and $1$,
respectively, for a growing $\avdegree$.

Given an edge $\{u, v\}$, we prove the above lower bound on the
probability that $\{u, v\}$ is monochrome in the following four steps.
\begin{enumerate*}
\item For a vertex set, we introduce the concept of
  \emph{decisiveness} that measures how much the majority is ahead of
  the minority in the \shortmodel.  With this, we give a lower
  bound on the probability that $\{u, v\}$ is monochrome based on the
  probability that the common neighborhood of $u$ and $v$ is more
  decisive than their exclusive neighborhoods.
\item We show that large neighborhoods are likely to be more decisive
  than small neighborhoods.  To achieve this, we give bounds on the
  likelihood that two similar random walks behave differently.  This
  step reduces the question of whether the common neighborhood is more
  decisive than the exclusive neighborhoods to whether the former is
  larger than the latter.
\item Turning to geometric random graphs, we show that the common
  neighborhood is sufficiently likely to be larger than the exclusive
  neighborhoods if the geometric region corresponding to the former is
  sufficiently large.  We do this by first showing that the actual
  distribution of the neighborhood sizes is well approximated by
  independent random variables that follow binomial distributions.
  Afterwards, we give the desired bounds for these random variables.
\item We show that the existence of the edge $\{u, v\}$ in the
  geometric random graph makes it sufficiently likely that the
  geometric region hosting the common neighborhood of $u$ and $v$ is
  sufficiently large.
\end{enumerate*}

\subsection{Monochrome Edges via Decisive Neighborhoods}
\label{sec:monochrom-edges-via}

Let $\{u, v\}$ be an edge of a given graph.  To formally define the above mentioned decisiveness, let $N_\uandv^+$ and  $N_\uandv^-$ be the number of vertices in the common neighborhood of $u$ and $v$ that are occupied by  agents of type $t^+$ and $t^-$, respectively.
Then $D_\uandv \coloneqq |N_\uandv^+ - N_\uandv^-|$ is the \emph{decisiveness} of the common neighborhood of $u$ and $v$.  Analogously, we define $D_\unov$ and $D_\vnou$ for the exclusive neighborhoods of $u$ and $v$, respectively. 

The following theorem bounds the probability for $\{u, v\}$ to be monochrome based on the probability that the common neighborhood is more decisive than each of the exclusive neighborhoods.

\begin{theorem}
  \label{lem:decision_tree}
  In the \shortmodel, let $\{u, v\} \in E$ be an edge and let $\decisiveness$ be the event $\{D_\uandv > D_\unov \wedge D_\uandv > D_\vnou\}$. Then $\{u, v\}$ is monochrome with probability at least $1 / 2 + \Pr{\decisiveness} /2$.
\end{theorem}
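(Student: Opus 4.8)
The plan is to condition on the types of the agents in the common neighborhood $N_\uandv$ and the two exclusive neighborhoods $N_\unov$, $N_\vnou$ — everything except the types of $u$ and $v$ themselves — and to show that, conditioned on the event $\decisiveness$, the edge $\{u,v\}$ is monochrome with probability at least $3/4$, while unconditionally it is always monochrome with probability at least $1/2$. Combining these via the law of total probability gives $\Pr{\{u,v\}\text{ monochrome}} \ge 3/4 \cdot \Pr{\decisiveness} + 1/2 \cdot (1 - \Pr{\decisiveness}) = 1/2 + \Pr{\decisiveness}/2$, which is exactly the claimed bound. So the whole argument reduces to the conditional statement.

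To see why $\decisiveness$ forces monochromaticity with probability $\ge 3/4$: fix the types in $N_u \cup N_v$, and now reveal the types of $u$ and $v$ (each uniform, independent, and independent of everything revealed so far). Consider the four equally likely sign patterns for $(u,v)$. The decision of $u$ in the \shortmodel\ depends on the majority among its $\delta_u = N_\uandv + N_\unov + [\![v \in N_u]\!]$ neighbors; since $\{u,v\}\in E$, the vertex $v$ is one of $u$'s neighbors, so $u$'s neighborhood consists of $N_\uandv$ common neighbors, $N_\unov$ exclusive neighbors, and $v$ itself. The key observation is that when $D_\uandv > D_\unov$, the sign of the majority-minus-minority count over $u$'s \emph{fixed} neighbors (the common plus exclusive ones) is determined by the common neighborhood's majority, regardless of how the exclusive neighbors split — the common side strictly dominates. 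I would make this precise: let $s_\uandv \in \{+,-\}$ be the majority type in $N_\uandv$ (well-defined since $D_\uandv \ge 1$ on $\decisiveness$), and show that on $\decisiveness$ the post-flip type of $u$ equals $s_\uandv$ with probability at least $1/2$, taken over the coin flips of $u$ and its tie-breaking, and in fact that $u$'s final type is $s_\uandv$ with probability $\ge 1/2$ in a way that is \emph{independent of $u$'s initial type and of $v$'s initial type}. The same holds for $v$ with the same $s_\uandv$. Hence, conditioned on $\decisiveness$, $u$ ends up type $s_\uandv$ with probability $\ge 1/2$ and independently $v$ ends up type $s_\uandv$ with probability $\ge 1/2$; but I actually want them to \emph{agree}, so I should track the full distribution of $u$'s final type and $v$'s final type and argue they are positively correlated (or identical) given $\decisiveness$ — a cleaner route is to note that once $D_\uandv > D_\unov$ and $D_\uandv > D_\vnou$, the final type of $u$ is $s_\uandv$ unless $u$ is content with the opposite type, which can only happen if $u$'s own vote tips a near-tie, an event controlled by whether $v$'s (fixed, revealed) type matters; working through the handful of cases with a decision-tree/casework argument (as the lemma name \texttt{lem:decision\_tree} suggests) yields probability $\ge 3/4$ that $u$ and $v$ share type $s_\uandv$.

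Concretely, I would organize the casework by the value of $\delta_u \bmod 2$ and $\delta_v \bmod 2$ (whether ties are possible) and by whether $v$'s revealed type agrees with $s_\uandv$. In each branch, because $D_\uandv$ strictly exceeds $D_\unov$, the contribution of $v$'s single vote and $u$'s own tie-breaking coin can flip the outcome only with probability at most $1/4$ over the two independent fair coins (one for $u$'s initial type insofar as it creates or breaks a tie, one for the tie-break itself) — and symmetrically for $v$. A union bound over "$u$ deviates from $s_\uandv$" and "$v$ deviates from $s_\uandv$" then gives that both land on $s_\uandv$ with probability $\ge 1 - 1/4 - 1/4 = 1/2$; to reach $3/4$ I instead observe that the two deviation events, restricted to the relevant coins, are handled by a single shared analysis so that the monochrome event fails with probability at most $1/4$. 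Averaging over the conditioning on the fixed neighborhood types (all of which satisfy $\decisiveness$) preserves the bound.

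The main obstacle is the last bookkeeping step: carefully enumerating the tie/non-tie cases so that the probability of a non-monochrome outcome given $\decisiveness$ is genuinely at most $1/4$ rather than the easy-but-insufficient $1/2$. The subtlety is that $u$'s initial type and $v$'s initial type each play a dual role — they are the "default" the agent keeps when content, and they also count as one vote in the respective other agent's neighborhood — so the coins are not cleanly independent across the two agents' decisions. I expect the resolution is to reveal $u$'s and $v$'s types \emph{last} and argue that, given $\decisiveness$, the map from the pair of initial types to the pair of final types sends at least $3$ of the $4$ equally likely inputs to monochrome pairs (with tie-break coins handled inside), which is exactly a finite decision-tree verification. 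Everything else — the outer law of total probability and the unconditional $1/2$ lower bound (which follows since by symmetry each of $u,v$ is independently type $t^+$ or $t^-$ with probability exactly $1/2$ after the process when we ignore the conditioning, so $\Pr{\text{monochrome}} \ge 1/2$ trivially, or more carefully from the conditional bound being $\ge 1/2$ on the complement of $\decisiveness$ too) — is routine.
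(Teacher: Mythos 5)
There is a genuine gap, and it is twofold. First, the arithmetic of your outer decomposition does not deliver the claimed bound: with $\Pr{\monochrome \mid \decisiveness} \ge \frac34$ and $\Pr{\monochrome \mid \overline{\decisiveness}} \ge \frac12$ you only get $\frac34\Pr{\decisiveness} + \frac12\left(1-\Pr{\decisiveness}\right) = \frac12 + \Pr{\decisiveness}/4$, not $\frac12 + \Pr{\decisiveness}/2$. To prove the theorem along these lines you need $\Pr{\monochrome \mid \decisiveness} = 1$, so a target of $\frac34$ is not merely suboptimal — it is insufficient. (Also, as you note yourself, the second ingredient must be the conditional bound $\Pr{\monochrome \mid \overline{\decisiveness}} \ge \frac12$, not the unconditional one; the paper obtains it by observing that, w.l.o.g.\ $D_\uandv \le D_\unov$, the final type of $u$ is either a fair coin or the predominant type of $N_u \setminus N_v$, which is uniform and independent of $v$'s outcome because $N_u \setminus N_v$ and $N_v$ are disjoint.)

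Second, the complication that drives you down to $\frac34$ — the ``dual role'' of $v$'s vote and the near-tie casework — is an artifact of your accounting $\delta_u = N_\uandv + N_\unov + 1$. In the paper's definitions, $v \in N_u \setminus N_v$, so $v$'s type is already counted in $D_\unov$, and $N_u$ is exactly the disjoint union of the common neighborhood and the exclusive neighborhood of $u$. Hence the signed type-difference over all of $N_u$ is the sum of the two signed differences, and on $\decisiveness$ (where $D_\uandv > D_\unov$) this sum is nonzero and carries the sign of the common neighborhood's majority: $u$ sees a strict majority of the common predominant type and adopts it deterministically — no tie, no tie-break coin, no separate contribution from $v$. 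By the symmetric condition $D_\uandv > D_\vnou$ the same holds for $v$, so both endpoints adopt the common predominant type and $\Pr{\monochrome \mid \decisiveness} = 1$ exactly. This single observation replaces your entire decision-tree casework and is precisely what makes the coefficient $\frac12$ in front of $\Pr{\decisiveness}$ attainable.
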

\begin{proof} 
  If $\decisiveness$ occurs, then the types of~$u$ and~$v$ after the
  \shortmodel coincide with the predominant type before the \shortmodel in the shared
  neighborhood. Thus, $\{u,v\}$ is monochrome.
  
  Otherwise, assuming $\overline{\decisiveness}$, w.l.o.g., let $D_\uandv \le D_\unov$ and assume the type of $v$ has
  already been determined.  If $D_\uandv = D_\unov$, then $u$ chooses
  a type uniformly at random, which coincides with the type of $v$
  with probability $\frac{1}{2}$.  Otherwise, $D_\uandv < D_\unov$ and
  thus $u$ takes the type that is predominant in $u$'s exclusive
  neighborhood, which is $t^+$ and $t^-$ with probability
  $\frac{1}{2}$, each.  Moreover, this is independent from the type of
  $v$ as $v$'s neighborhood is disjoint to $u$'s exclusive
  neighborhood.

  Thus, for the event $\monochrome$ that $\{u, v\}$ is monochrome, we
  get $\Pr{\monochrome \mid \decisiveness} = 1$ and
  $\Pr{\monochrome \mid \overline \decisiveness} = \frac{1}{2}$.
  Hence, we get
  $\Pr{\monochrome} > \Pr{\decisiveness} + \frac12
  \left( 1 - \Pr{\decisiveness} \right) = \frac12 + \Pr{\decisiveness}
  /2$.
\end{proof}
	
\subsection{Large Neighborhoods are More Decisive}
\label{sec:large-neighb-are}

The goal of this section is to reduce the question of how decisive a
neighborhood is to the question of how large it is. To be more
precise, assume we have a set of vertices of size $a$ and give each
vertex the type $t^+$ and $t^-$, each with probability $\frac{1}{2}$.
Let $A_i$ for $i \in [a]$ be the random variable that takes the value
$+1$ and $-1$ if the $i$-th vertex in this set has type $t^+$ and
$t^-$, respectively.  Then, for $A = \sum_{i \in [a]} A_i$, the
decisiveness of the vertex set is $|A|$.  In the following, we study
the decisiveness $|A|$ depending on the size $a$ of the set.  Note
that this can be viewed as a random walk on the integer line: Starting
at $0$, in each step, it moves one unit either to the left or to the
right with equal probabilities.  We are interested in the distance
from $0$ after $a$ steps.

Assume for the vertices $u$ and $v$ that we know that $b$ vertices lie in the common neighborhood and
$a$ vertices lie in the exclusive neighborhood of $u$.  Moreover, let
$A$ and $B$ be the positions of the above random walk after $a$ and
$b$ steps, respectively.  Then the event $D_\uandv > D_\unov$ is
equivalent to $|B| > |A|$.  Motivated by this, we study the
probability of $|B| > |A|$, assuming $b \ge a$.  The core difficulty
here comes from the fact that we require $|B|$ to be strictly larger
than $|A|$.  Also note that $a + b$ corresponds to the degree of $u$ in
the graph.  Thus, we have to study the random walks also for small
numbers of $a$ and $b$.  We note that all results in this section are
independent from the specific application to the \shortmodel, and thus
might be of independent interest.

Before we give a lower bound on the probability that $|B| > |A|$, we
need the following technical lemma.  It states that doing
more steps in the random walk only makes it more likely to deviate
further from the starting position.

\begin{lemma}
  \label{lem:obvious-lemma-rand-walk}
  For $i \in [a]$ and $j \in [b]$ with $0 \le a \le b$, let $A_i$ and
  $B_j$ be independent random variables that are $-1$ and $1$ each
  with probability $\frac{1}{2}$.  Let $A = \sum_{i\in [a]} A_i$ and
  $B = \sum_{j \in [b]} B_j$.  Then $\Pr{|A| < |B|} \ge \Pr{|A| > |B|}$.
\end{lemma}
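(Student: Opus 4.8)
The plan is to couple the two random walks by using the first $a$ steps of both in a shared way, so that the difference $B-A$ reduces to a fresh random walk of $b-a$ steps, and then exploit a symmetry/reflection argument. Concretely, I would realize $A$ and $B$ on the same probability space by setting $A_i = B_i$ for all $i \in [a]$ (the paper's hypothesis that the $A_i$ and $B_j$ are independent lets me instead re-couple them this way without changing either marginal), so that $B = A + C$ where $C = \sum_{i=a+1}^{b} B_i$ is an independent random walk of $c := b-a \ge 0$ steps. The claim $\Pr{|A|<|B|} \ge \Pr{|A|>|B|}$ becomes $\Pr{|A| < |A+C|} \ge \Pr{|A| > |A+C|}$.

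Next I would condition on the value of $A$; by symmetry of $A$ about $0$ it suffices to show, for each fixed integer $k \ge 0$, that $\Pr{|k+C| > k} \ge \Pr{|k+C| < k}$, where $C$ is a symmetric $\pm 1$ random walk of length $c$. Write this out: $|k+C| > k$ happens iff $C > 0$ or $C < -2k$, while $|k+C| < k$ happens iff $-2k < C < 0$. So I need $\Pr{C>0} + \Pr{C<-2k} \ge \Pr{-2k<C<0}$. By symmetry of $C$, $\Pr{C>0} = \Pr{C<0} = \Pr{-2k<C<0} + \Pr{C \le -2k}$, hence the desired inequality is equivalent to $\Pr{C \le -2k} + \Pr{C < -2k} \ge 0$, which is trivially true. (The edge case $k=0$ needs a separate trivial check: then $|C|>0 \iff C \ne 0$ and $|C| < 0$ is impossible, so the inequality holds.) Integrating back over the distribution of $A$ gives the lemma.

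I expect the only mild subtlety to be the re-coupling step: the lemma as stated fixes the $A_i$ and $B_j$ as independent, so strictly speaking I am proving the inequality for the *law* of $(|A|,|B|)$, which only depends on $a$ and $b$, and I am free to compute that law using any convenient joint construction — in particular the one where the first $a$ increments agree. I would state this explicitly to avoid confusion. After that, everything is elementary: a conditioning on $A$, a case split on the sign of $k+C$, and one application of the symmetry of a simple random walk. There is no real obstacle; the point of the lemma is just to package this monotonicity cleanly for use in the proof that large neighborhoods are more decisive.
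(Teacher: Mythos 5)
There is a genuine gap, and it is exactly at the step you flagged as a ``mild subtlety'': the re-coupling is not legitimate. The quantity $\Pr{|A| < |B|}$ is a functional of the \emph{joint} law of $(|A|,|B|)$, not of the two marginals, and your construction with $A_i = B_i$ for $i \in [a]$ changes that joint law. Under the lemma's hypothesis the pair $(|A|,|B|)$ has the product law of the two marginals; under your coupling it does not. A concrete instance: for $a = b = 2$, independence gives $\Pr{|A| < |B|} = \Pr{|A| = 0}\cdot\Pr{|B| = 2} = \tfrac14$, while your coupling forces $A = B$ and hence $\Pr{|A| < |B|} = 0$. So what you prove is a statement about a different joint distribution (first $a$ increments shared), and it does not imply the lemma, which is needed downstream precisely for \emph{independent} sign sums coming from disjoint vertex sets.

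The failure is not merely formal: your key conditional inequality does not survive the passage to the independent setting. Conditioning on $|A| = k$ when $A$ and $B$ are independent, you would need $\Pr{|B| > k} \ge \Pr{|B| < k}$ for each $k$ in the support of $|A|$, and this is false in general; e.g.\ for $b = 4$ and $k = 2$ one has $\Pr{|B| > 2} = \tfrac18 < \tfrac38 = \Pr{|B| < 2}$, even though the lemma's conclusion still holds there (for $a = 2$, $b = 4$: $\Pr{|A| < |B|} = \tfrac38 \ge \tfrac{3}{16} = \Pr{|A| > |B|}$). In your coupled picture the conditional step works only because, given $A = k$, your $B$ equals $k + C$ and is therefore biased toward values near $k$ in absolute value --- a feature the independent $B$ does not have. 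To close the gap you need a genuinely joint argument; the paper does this by considering the events $\Delta_k = \{|B| - |A| = k\}$ and proving $\Pr{\Delta_k} \ge \Pr{\Delta_{-k}}$ for all $k \ge 0$ by induction on $b - a$ (extending $B$ one step at a time and averaging the induction hypothesis at $k-1$ and $k+1$). Note also that a quick fix via stochastic dominance of $|B|$ over $|A|$ is unavailable because of the parity obstruction (e.g.\ $a = 2$, $b = 3$: $\Pr{|A| \ge 2} = \tfrac12 > \tfrac14 = \Pr{|B| \ge 2}$), so some induction or direct manipulation of the difference distribution, as in the paper, seems genuinely necessary.
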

\begin{proof}
  Let $\Delta_k$ be the event that $|B| - |A| = k$.  First note that
  \begin{equation*}
    \Pr{|A| < |B|} = \sum_{k \in [b]} \Pr{\Delta_k}
    \quad \text{ and }\quad
    \Pr{|A| > |B|} = \sum_{k \in [a]} \Pr{\Delta_{-k}}. 
  \end{equation*}
  To prove the statement of the lemma, it thus suffices to prove the
  following claim.
  \begin{claim}
    \label{claim:obvious-lemma-rand-walk-claim}
    For $k \ge 0$, $\Pr{\Delta_k} \ge \Pr{\Delta_{-k}}$.
  \end{claim}
  We prove this claim via induction on $b - a$. 
  For the base case $a = b$, $A$ and $B$ are equally distributed and thus
  Claim~\ref{claim:obvious-lemma-rand-walk-claim} clearly holds.

  For the induction step, let $B^+$ be the random variable that takes
  the values $B + 1$ and $B - 1$ with probability $\frac{1}{2}$ each.
  Note that $B^+$ represents the same type of random walk as $A$ and
  $B$ but with $b + 1$ steps.  Moreover $B^+$ is coupled with $B$ to
  make the same decisions in the first $b$ steps.  Let $\Delta^+_k$ be
  the event that $|B^+| - |A| = k$.  It remains to show that
  Claim~\ref{claim:obvious-lemma-rand-walk-claim} holds for these
  $\Delta^+_{k}$.
  For this, first note that the claim trivially holds for $k = 0$.
  For $k \ge 1$, we can use the definition of $\Delta^+_k$ and the
  induction hypothesis to obtain
  \begin{align*}
    \Pr{\Delta^+_{k}}
    &= \frac{\Pr{\Delta_{k-1}}}{2} +  \frac{\Pr{\Delta_{k+1}}}{2}\\
    &\ge \frac{\Pr{\Delta_{-k+1}}}{2} + \frac{\Pr{\Delta_{-k-1}}}{2} = \Pr{\Delta^+_{-k}}. \qedhere
  \end{align*}
\end{proof}
Using \Cref{lem:obvious-lemma-rand-walk}, we can now prove the following general bound for the
probability that $|A| < |B|$, depending on certain probabilities for
binomially distributed variables.

\begin{lemma}
  \label{lem:random_walk}
  For $i \in [a]$ and $j \in [b]$ with $0 \le a \le b$, let $A_i$ and
  $B_j$ be independent random variables that are $-1$ and $1$ each
  with probability $\frac{1}{2}$.  Let $A = \sum_{i\in [a]} A_i$ and
  $B = \sum_{j \in [b]} B_j$.  Moreover, let
  $X \sim \Bin(a, \frac{1}{2})$, $Y \sim \Bin(b, \frac{1}{2})$, and
  $Z \sim \Bin(a + b, \frac{1}{2})$.  Then
  \begin{equation*}
    \Pr{|A| < |B|} \ge \frac{1}{2}
    - \Pr{Z = \frac{a + b}{2}}
    + \frac{\Pr{X = \frac{a}{2}} \cdot \Pr{Y = \frac{b}{2}}}{2}.
  \end{equation*}
\end{lemma}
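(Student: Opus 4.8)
The plan is to pair the ``one more step deviates further'' symmetry of \Cref{lem:obvious-lemma-rand-walk} with an exact evaluation of $\Pr{|A| = |B|}$ in terms of binomial point masses. First I would note that the three events $\{|A| < |B|\}$, $\{|A| = |B|\}$, and $\{|A| > |B|\}$ partition the sample space, so their probabilities sum to $1$. Combining this with $\Pr{|A| > |B|} \le \Pr{|A| < |B|}$ from \Cref{lem:obvious-lemma-rand-walk} gives $2\Pr{|A| < |B|} + \Pr{|A| = |B|} \ge 1$, i.e.
\[
  \Pr{|A| < |B|} \ge \frac{1}{2} - \frac{1}{2}\Pr{|A| = |B|}.
\]
Hence it suffices to establish the identity $\Pr{|A| = |B|} = 2\Pr{Z = \frac{a+b}{2}} - \Pr{X = \frac{a}{2}}\cdot\Pr{Y = \frac{b}{2}}$.

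For the identity, observe that $\{|A| = |B|\} = \{A = B\} \cup \{A = -B\}$ and that $\{A = B\} \cap \{A = -B\} = \{A = 0\} \cap \{B = 0\}$, because $A = B$ and $A = -B$ together force $B = 0$ and then $A = 0$, and conversely. Inclusion--exclusion together with the independence of $A$ and $B$ then yields $\Pr{|A| = |B|} = \Pr{A = B} + \Pr{A = -B} - \Pr{A = 0}\cdot\Pr{B = 0}$. Now $-B$ has the same distribution as $B$ (negate each step), so $\Pr{A = -B} = \Pr{A = B}$; moreover $A - B$ is a sum of $a + b$ independent $\pm 1$ steps, hence equidistributed with $2Z - (a+b)$ for $Z \sim \Bin(a+b,\frac12)$, which gives $\Pr{A = B} = \Pr{Z = \frac{a+b}{2}}$ — a point mass that is automatically $0$ when $a + b$ is odd, matching the fact that $A - B$ then cannot vanish. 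Similarly $\Pr{A = 0} = \Pr{X = \frac{a}{2}}$ and $\Pr{B = 0} = \Pr{Y = \frac{b}{2}}$. Substituting these into the inclusion--exclusion expression proves the claimed identity and thus the lemma.

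The substantive part of the argument is already packaged into \Cref{lem:obvious-lemma-rand-walk}; what remains here is essentially bookkeeping, and the only places that require care are identifying the intersection event in the inclusion--exclusion step correctly and checking the parity of $a+b$, for which the relevant binomial point masses vanish exactly when they should. In particular, no estimates or limiting arguments are needed: the displayed lower bound drops out of the exact formula for $\Pr{|A| = |B|}$.
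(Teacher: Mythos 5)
Your proof is correct and follows essentially the same route as the paper: both derive $\Pr{|A| < |B|} \ge \frac12 - \frac12 \Pr{|A| = |B|}$ from \Cref{lem:obvious-lemma-rand-walk}, then evaluate $\Pr{|A| = |B|}$ exactly via inclusion--exclusion, the symmetry of $B$, and the identification of $A$ and $B$ with shifted binomials (the paper passes through $\Pr{A = -B}$ rather than your $\Pr{A = B}$, which is an immaterial difference). No gaps.
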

\begin{proof}
  Using that $\Pr{|A| < |B|} \ge \Pr{|A| > |B|}$ (see
  Lemma~\ref{lem:obvious-lemma-rand-walk}), we obtain
  \begin{align}
    \notag
    &&\Pr{|A| < |B|} +  \Pr{|A| > |B|} +  \Pr{|A| = |B|} &= 1\\
    \notag
    &\Rightarrow& 2\Pr{|A| < |B|} + \Pr{|A| = |B|} &\ge 1\\
    \label{eq:random_walk_1}
    &\Leftrightarrow& \Pr{|A| < |B|} &\ge \frac{1}{2} -  \frac{\Pr{|A| = |B|}}{2}.
  \end{align}
  Thus, it remains to give an upper bound for $\Pr{|A| = |B|}$.

  Using the inclusion--exclusion principle and the fact that $B$ is
  symmetric around $0$, i.e., $\Pr{B = x} = \Pr{B = -x}$ for any $x$,
  we obtain
  \begin{align}
    \notag
    \Pr{|A| = |B|}
    &= \Pr{A = B \vee A = -B}\\
    \notag
    &= \Pr{A = B} + \Pr{A = -B} - \Pr{A = B = 0}\\
    \label{eq:random_walk_2}
    &= 2\Pr{A = -B} - \Pr{A = B = 0}.
  \end{align}
  We estimate $\Pr{A = -B}$ and $\Pr{A = B = 0}$ using bounds for
  binomially distributed variables.  To this end, define new random
  variables $X_i = \frac{A_i + 1}{2}$ for $i \in [a]$ and let
  $X = \sum_{i \in [a]} X_i$.  Note that the $X_i$ are independent and
  take values $0$ and $1$, each with probability $\frac{1}{2}$.  Thus,
  $X \sim \Bin(a, \frac{1}{2})$.  Moreover, $A = 2 X - a$.
  Analogously, we define $Y$ with $Y \sim \Bin(b, \frac{1}{2})$ and
  $B = 2 Y - b$.  Note that $X$ and $Y$ are independent and thus
  $Z = X + Y \sim \Bin(a + b, \frac{1}{2})$.  With this, we get
  \begin{align*}
    \Pr{A = -B}
    &= \Pr{2X - a = - 2Y + b}
      = \Pr{Z = \frac{a + b}{2}} \text{, and}\\
    \Pr{A = B = 0}
    &= \Pr{A = 0} \cdot \Pr{B = 0}
      = \Pr{X = \frac{a}{2}} \cdot \Pr{Y = \frac{b}{2}}.
  \end{align*}
  This, together with \Cref{eq:random_walk_1,eq:random_walk_2} yield the claim.
\end{proof}
The bound in Lemma~\ref{lem:random_walk} becomes worse for smaller
values of $a$ and $b$.  Considering this worst case, we obtain the
following specific bound.

\begin{theorem}
  \label{thm:random-walk}
  For $i \in [a]$ and $j \in [b]$ with $0 \le a \le b$, let $A_i$ and
  $B_j$ be independent random variables that are $-1$ and $1$ each
  with probability $\frac{1}{2}$.  Let $A = \sum_{i\in [a]} A_i$ and
  $B = \sum_{j \in [b]} B_j$. If $a = b = 0$ or $a = b = 1$, then
  $\Pr{|A| < |B|} = 0$.  Otherwise $\Pr{|A| < |B|} \ge \frac{3}{16}$.
\end{theorem}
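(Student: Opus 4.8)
The plan is to feed everything through \Cref{lem:random_walk} and then split cases according to the parity and magnitude of $a+b$. First I would dispose of the two excluded configurations by inspection: for $a=b=0$ we have $A=B=0$, and for $a=b=1$ we have $|A|=|B|=1$ with probability one, so $\Pr{|A|<|B|}=0$ in both. For every remaining pair $(a,b)$ with $0\le a\le b$ (hence $b\ge 1$), \Cref{lem:random_walk} applied with $X\sim\Bin(a,\tfrac12)$, $Y\sim\Bin(b,\tfrac12)$, $Z\sim\Bin(a+b,\tfrac12)$ yields
\[
  \Pr{|A|<|B|}\ \ge\ \tfrac12-\Pr{Z=\tfrac{a+b}{2}}+\tfrac12\,\Pr{X=\tfrac a2}\Pr{Y=\tfrac b2}\ \ge\ \tfrac12-\Pr{Z=\tfrac{a+b}{2}},
\]
since the last term is nonnegative, so it remains only to bound the central binomial probability $\Pr{Z=\tfrac{a+b}{2}}$ from above.

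If $a+b$ is odd, then $(a+b)/2\notin\N$, so $\Pr{Z=(a+b)/2}=0$ and the display already gives $\Pr{|A|<|B|}\ge\tfrac12\ge\tfrac3{16}$. If $a+b$ is even, I would write $a+b=2m$ and use $\Pr{Z=\tfrac{a+b}{2}}=\binom{2m}{m}2^{-2m}$. A one-line ratio computation gives $\binom{2m+2}{m+1}2^{-2m-2}\big/\binom{2m}{m}2^{-2m}=\tfrac{2m+1}{2m+2}<1$, so $m\mapsto\binom{2m}{m}2^{-2m}$ is strictly decreasing; combined with $\binom 63 2^{-6}=\tfrac{20}{64}=\tfrac5{16}$ this shows $\Pr{Z=\tfrac{a+b}{2}}\le\tfrac5{16}$ as soon as $a+b\ge 6$, and the display then gives $\Pr{|A|<|B|}\ge\tfrac12-\tfrac5{16}=\tfrac3{16}$.

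What is left are the finitely many pairs with $a+b$ even, $a+b\le 4$, and $(a,b)\notin\{(0,0),(1,1)\}$, namely $(a,b)\in\{(0,2),(0,4),(1,3),(2,2)\}$, which I would simply check by hand from the explicit distributions of $A$ and $B$: $(0,2)$ and $(0,4)$ give $1-\Pr{B=0}\in\{\tfrac12,\tfrac58\}$, $(1,3)$ gives $\Pr{|B|=3}=\tfrac14$, and $(2,2)$ gives $\Pr{|A|=0}\Pr{|B|=2}=\tfrac14$; all are at least $\tfrac3{16}$. The argument is essentially bookkeeping; the one place that needs attention is that the cutoff $a+b\ge 6$ is sharp — at $a+b=4$ the estimate from \Cref{lem:random_walk} degrades to $\tfrac12-\tfrac38=\tfrac18<\tfrac3{16}$ — so those four small even cases genuinely have to be handled separately rather than absorbed into the general bound.
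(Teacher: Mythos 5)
Your proposal is correct and follows essentially the same route as the paper's proof: dispose of the trivial cases, apply \Cref{lem:random_walk}, use parity to kill the central term when $a+b$ is odd, bound $\Pr{Z=\tfrac{a+b}{2}}$ by $\tfrac{5}{16}$ when $a+b\ge 6$, and check the four small even cases $(0,2),(0,4),(1,3),(2,2)$ by hand, with the same values $\tfrac12,\tfrac58,\tfrac14,\tfrac14$ the paper obtains. The only cosmetic difference is that you justify the monotonicity of the central binomial probability via an explicit ratio computation, which the paper uses implicitly.
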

\begin{proof}
  Clearly, if $a = b = 0$, then $A = B = 0$ and thus
  $\Pr{|A| < |B|} = 0$.  Similarly, if $a = b = 1$, then
  $|A| = |B| = 1$ and thus $\Pr{|A| < |B|} = 0$.  For the remainder,
  assume that neither $a = b = 0$ nor $a = b = 1$, and let $X$,
  $Y$, and $Z$ be defined as in Lemma~\ref{lem:random_walk}, i.e., $X \sim \Bin(a, \frac{1}{2})$, $Y \sim \Bin(b, \frac{1}{2})$, and
  $Z \sim \Bin(a + b, \frac{1}{2})$.
  
  If $a + b$ is odd, then $\Pr{Z = \frac{a + b}{2}} = 0$.  Thus, by
  Lemma~\ref{lem:random_walk}, we have
  $\Pr{|A| < |B|} \ge \frac{1}{2}$.
  If $a + b$ is even and $a + b \ge 6$, then
  \begin{equation*}
    \Pr{Z = \frac{a + b}{2}} =
    {a + b \choose \frac{a + b}{2}} \left(\frac{1}{2}\right)^{a + b} \le
    {6 \choose 3} \left(\frac{1}{2}\right)^{6} =
    \frac{5}{16}.
  \end{equation*}
  Hence, by Lemma~\ref{lem:random_walk}, we have
  $\Pr{|A| < |B|} \ge \frac{1}{2} - \frac{5}{16} = \frac{3}{16}$.

  If $a + b < 6$ (and $a + b$ even), there are four cases:
  $a = 0, b = 2$; $a = 0, b = 4$; $a = 1, b = 3$; $a = 2, b = 2$.
  If $a = 0$ and $b = 2$, then $A = 0$ with probability $1$ and
  $|B| = 2$ with probability $\frac{1}{2}$.  Thus,
  $\Pr{|A| < |B|} = \frac{1}{2}$.
  If $a = 0$ and $b = 4$, then $|A| < |B|$ unless $B = 0$.  As
  $\Pr{B = 0} = {4 \choose 2} \cdot (\frac{1}{2})^4 = \frac{3}{8}$, we
  get $\Pr{|A| < |B|} = 1 - \frac{3}{8} = \frac{5}{8}$.
  If $a = 1$ and $b = 3$, then $|A| = 1$ with probability $1$ and
  $|B| = 3$ with probability $\frac{1}{4}$ (either
  $B_1 = B_2 = B_3 = 1$ or $B_1 = B_2 = B_3 = -1$). Thus,
  $\Pr{|A| < |B|} = \frac{1}{4}$.
  If $a = b = 2$, then $|A| = 0$ with probability~$\frac{1}{2}$ and
  $|B| = 2$ with probability $\frac{1}{2}$.  Thus
  $\Pr{|A| < |B|} = \frac{1}{4}$.

  We note that the bound of $\Pr{|A| < |B|} = \frac{3}{16}$ is tight
  for $a = b = 3$.
\end{proof}

\subsection{Large Common Regions Yield Large Common Neighborhoods}
\label{subsec:neighborhood_region}

To be able to apply Theorem~\ref{thm:random-walk} to an edge
$\{u, v\}$, we need to make sure that the size of their common
neighborhood (corresponding to $b$ in the corollary) is at least the
size of the exclusive neighborhoods (corresponding to $a$ in the
corollary).  In the following, we give bounds for the probability that
this happens.  Note that this is the first time we actually take the
graph into account.  Thus, all above considerations hold for arbitrary
graphs.

Recall that we consider random geometric graphs $\mathcal{G}(n,r)$ and $u$ and $v$ are each connected to all vertices that lie
within a disk of radius $r$ around them.  As $u$ and $v$ are adjacent,
their disks intersect, which separates the ground space into four
regions; cf. Fig~\ref{fig:intersection_neighborhood}. Let $R_\uandv$ be the intersection of the two disks.
Let $R_\unov$ be the set of points that lie in the disk of~$u$ but not
in the disk of $v$, and analogously, let $R_\vnou$ be the disk of $v$
minus the disk of $u$.  Finally, let $R_\nouv$ the set of points
outside both disks.  Then, each of the $n - 2$ remaining vertices ends
up in exactly one of these regions with a probability equal to the
corresponding measure.  Let $p = \mu(R_\uandv)$ and
$q = \mu(R_\unov) = \mu(R_\vnou)$ be the probabilities for the common
and exclusive regions, respectively.  The probability for $R_\nouv$ is
then $1 - p - 2q$.

We are now interested in the sizes $N_\uandv$, $N_\unov$, and
$N_\vnou$ of the common and the exclusive neighborhoods, respectively.
As each of the $n - 2$ remaining vertices ends up in $R_\uandv$ with
probability $p$, we have $R_\uandv \sim \Bin(n - 2, p)$.  For
$N_\unov$ and $N_\vnou$, we already know that $v$ is a neighbor of $u$
and vice versa.  Thus, $(N_\unov - 1) \sim \Bin(n - 2, q)$ and
$(N_\vnou - 1) \sim \Bin(n - 2, q)$.  Moreover, the three random
variables are not independent, as each vertex lies in only exactly one
of the four regions, i.e., $N_\uandv$, $(N_\unov - 1)$,
$(N_\vnou - 1)$, and the number of vertices in neither neighborhood
together follow a multinomial distribution $\Multi(n - 2, \boldsymbol{p})$
with $\boldsymbol{p} = \left(p, q, q, 1-p-2q\right)$.

The following lemma shows that these dependencies are small if $p$ and
$q$ are sufficiently small.  This lets us assume that $N_\uandv$,
$(N_\unov - 1)$, $(N_\vnou - 1)$ are independent random variables
following binomial distributions if the expected average degree is not
too large.

\begin{lemma}
  \label{lem:multi_vs_binom}
  Let $X = \left(X_1, X_2, X_3, X_4\right) \sim
  \Multi\left(n,\boldsymbol{p}\right)$ with
  $\boldsymbol{p} = \left(p, q, q, 1-p-2q\right)$.  Then there exist
  independent random variables $Y_1 \sim \Bin\left(n,p\right)$,
  $Y_2 \sim \Bin\left(n,q\right)$ and
  $Y_3 \sim \Bin\left(n,q\right)$ such that
  $\Pr{\left(X_1, X_2, X_3\right) = \left(Y_1, Y_2,
      Y_3\right)} \geq 1 - 3 n\cdot \max(p,q)^2$.
\end{lemma}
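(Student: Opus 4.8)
The plan is to build an explicit coupling between $X$ and three independent binomials $Y_1 \sim \Bin(n, p)$, $Y_2 \sim \Bin(n, q)$, $Y_3 \sim \Bin(n, q)$ under which the event $(X_1, X_2, X_3) = (Y_1, Y_2, Y_3)$ fails with probability at most $3 n \max(p,q)^2$. By the standard characterization of total variation distance via optimal couplings, this is the same as bounding $d_{\mathrm{TV}}$ between the law of the first three coordinates of $\Multi(n, \boldsymbol p)$ and the law of $(Y_1, Y_2, Y_3)$, so either viewpoint will do; I will describe the coupling.

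First I would generate $X$ through $n$ independent per-ball trials, where ball $m \in [n]$ falls into cell $k \in \{1,2,3,4\}$ with probability $p_k$, writing $(p_1, p_2, p_3, p_4) = (p, q, q, 1 - p - 2q)$ and letting $C_m$ be the cell of ball $m$, so that $X_k = \lvert \{ m : C_m = k \} \rvert$. On the same probability space I would attach to each ball indicators $B^{(1)}_m, B^{(2)}_m, B^{(3)}_m$ designed so that, across all $m$ and $k$, the $B^{(k)}_m$ are mutually independent with $B^{(k)}_m \sim \mathrm{Bernoulli}(p_k)$ — whence $Y_k := \sum_{m} B^{(k)}_m$ are independent binomials of the required form — while for each ball one has $\mathbf 1[C_m = k] = B^{(k)}_m$ for all $k$ with high probability, in which case $X_k = Y_k$ for every $k$ simultaneously. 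The heart of the construction is a single joint distribution of $(C_m, B^{(1)}_m, B^{(2)}_m, B^{(3)}_m)$ with the prescribed marginals that puts as much mass as possible on the agreeing outcomes, namely $\{C_m = k\} \cap \{(B^{(1)}_m, B^{(2)}_m, B^{(3)}_m) = \boldsymbol e_k\}$ for $k \in \{1, 2, 3\}$ and $\{C_m = 4\} \cap \{(B^{(1)}_m, B^{(2)}_m, B^{(3)}_m) = (0, 0, 0)\}$; the leftover probability is then distributed arbitrarily subject to the marginals, which is feasible because the residual row and column marginals carry equal total mass. The only mechanisms by which a ball can disagree are that its shared randomness ``triggers'' two or more of the three small Bernoulli events at once — which, by a union bound over pairs, happens with probability at most $\sum_{i < j} p_i p_j = 2pq + q^2$ — and a correction of the same order stemming from the mismatch between the cell-$4$ weight $1 - p - 2q$ and the all-clear probability $(1-p)(1-q)^2$. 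Since $2pq + q^2 \le 3 \max(p, q)^2$ (if $p \le q$ the left-hand side is at most $3q^2$, and if $p > q$ it is at most $3p^2$), a union bound over the $n$ balls gives the claim.

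The step I expect to be the main obstacle is pinning the per-ball disagreement probability down to $3 \max(p,q)^2$ rather than merely $\bigO{\max(p,q)^2}$: the naive assignment that sends ball $m$ to its unique triggered small cell (or to cell $4$ if no small cell triggers) does not reproduce the multinomial weights, so some probability mass must be moved between the all-clear outcome and the small cells, and the delicate point is to arrange this redistribution so that the moved mass cancels against the already-present collision outcomes rather than generating fresh disagreements. Should this bookkeeping prove too fragile, the fallback is to drop the explicit coupling and bound $d_{\mathrm{TV}}$ between the two probability mass functions by a direct term-by-term estimate. All the remaining ingredients — the coupling/total-variation equivalence, the inequality $2pq + q^2 \le 3 \max(p,q)^2$, and the final union bound over the $n$ balls — are routine.
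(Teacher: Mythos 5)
Your skeleton (generate the multinomial from $n$ per-ball trials, couple each ball's cell variable with three Bernoulli indicators, union bound over balls) matches the paper's, but the step you yourself flag as the main obstacle is a genuine gap, and it cannot be closed within your framework. Once you insist that for every ball the cell variable $C_m$ and the triple $(B^{(1)}_m,B^{(2)}_m,B^{(3)}_m)$ each carry their exact marginals, the best achievable per-ball agreement is $\sum_{k}\min\bigl(\mu(k),\nu(v_k)\bigr)$ over the four agreeing outcomes, so the minimal per-ball disagreement is exactly the per-ball total variation distance $4pq+2q^2-3pq^2$. This is the \emph{sum} of the collision mass $2pq+q^2-2pq^2$ and the all-zero excess $(1-p)(1-q)^2-(1-p-2q)=2pq+q^2-pq^2$; the cancellation you hope to arrange is impossible, because any mass of, say, $\{C_m=1\}$ that gets paired with a collision outcome or with $(0,0,0)$ is a disagreement no matter how you redistribute. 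For $p=q$ the per-ball disagreement is about $6q^2>3\max(p,q)^2$, so your route yields at best $\Pr{(X_1,X_2,X_3)=(Y_1,Y_2,Y_3)}\geq 1-6n\max(p,q)^2$ — enough for the downstream use in \Cref{thm:final-result} (where the lemma only feeds a $1-\smallO{1}$ factor), but not the stated bound with constant $3$. Your fallback of estimating total variation directly does not rescue this as described: a term-by-term or per-trial estimate bounds the TV of the sum laws by $n$ times the per-ball TV, which reproduces the same factor.

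The paper obtains the constant $3$ precisely by \emph{not} being per-ball exact on the multinomial side: it starts from the independent binomial trials $Y_{i,j}$, lets $\binomial$ be the event that no trial has two or more successes among $Y_{1,j},Y_{2,j},Y_{3,j}$, sets $X=(Y_1,Y_2,Y_3,\,n-Y_1-Y_2-Y_3)$ on $\binomial$, and redraws $X$ wholesale from $\Multi(n,\boldsymbol{p})$ on $\overline\binomial$. The per-trial failure probability is then only the collision mass $2pq-2pq^2+q^2\leq 3\max(p,q)^2$; the all-zero mismatch is never charged, because the construction does not ask each trial's $X$-assignment to have cell probabilities exactly $(p,q,q,1-p-2q)$ conditioned on the good event. (Whether that conditional-distribution claim in the paper is fully airtight is a separate issue, but it is the mechanism that yields $3$ rather than $6$.) To prove the lemma as stated you should adopt this asymmetric construction (good event defined only through the $Y$-trials, resampling on the bad event) rather than the marginal-exact per-ball coupling.
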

\begin{proof}
  Let $Y_1 \sim \Bin\left(n,p\right)$, and
  $Y_2, Y_3 \sim \Bin\left(n,q\right)$ be independent random
  variables.  We define the event $\binomial$ to hold, if each of the
  $n$ individual trials increments at most one of the random variables
  $Y_1$, $Y_2$, or $Y_3$.  More formally, for $i \in [3]$ and
  $j \in [n]$, let $Y_{i, j}$ be the individual Bernoulli trials of
  $Y_i$, i.e., $Y_i = \sum_{j \in [n]} Y_{i, j}$.  For $j \in [n]$, we
  define the event $\binomial_j$ to be
  $Y_{1, j} + Y_{2, j} + Y_{3, j} \le 1$, and the event
  $\binomial = \bigcap_{j \in [n]} \binomial_j$.

  Based on this, we now define the random variables $X_1$, $X_2$,
  $X_3$, and $X_4$ as follows.  If $\binomial$ holds, we set
  $X_i = Y_i$ for $i \in [3]$ and $X_4 = n - X_1 - X_2 - X_3$.
  Otherwise, if $\overline\binomial$, we draw
  $X = (X_1, X_2, X_3, X_4) \sim \Multi\left(n,\boldsymbol{p}\right)$
  independently from $Y_1$, $Y_2$, and $Y_3$ with
  $\boldsymbol{p} = (p, q, q, 1 - p - 2q)$.  Note that $X$ clearly
  follows $\Multi\left(n,\boldsymbol{p}\right)$ if
  $\overline\binomial$.  Moreover, conditioned on $\binomial$, each
  individual trial increments exactly one of the variables $X_1$,
  $X_2$, $X_3$, or $X_4$ with probabilities $p$, $q$, $q$, and
  $1 - p - 2q$, respectively, i.e.,
  $X \sim \Multi\left(n,\boldsymbol{p}\right)$.

  Thus, we end up with $X \sim \Multi\left(n,\boldsymbol{p}\right)$.
  Additionally, we have three independent random variables
  $Y_1 \sim \Bin\left(n,p\right)$, and
  $Y_2, Y_3 \sim \Bin\left(n,q\right)$ with
  $(X_1, X_2, X_3) = (Y_1, Y_2, Y_3)$ if $\binomial$ holds.  Thus, to
  prove the lemma, it remains to show that
  $\Pr{\binomial} \ge 1 - 3n \max(p, q)^2$.  For
  $j \in [n]$, the probability that the $j$th trial goes wrong is
  \begin{align*}
    \Pr{\overline\binomial_j} 
    &= 1-\left((1 - p) (1-q)^2\right) - \left(p (1-q)^2\right) - 2\left(q (1-p) (1 - q)\right) \\
                  &= 2pq - 2pq^2 + q^2 \leq  2pq  + q^2  \leq 3 \cdot \max(p,q)^2.
  \end{align*}
  Using the union bound it follows that
  $
  \Pr{\overline\binomial} \leq \sum_{j \in [n]} \Pr{\overline\binomial_j } \leq 3 n \cdot \max(p,q)^2
  $.
\end{proof}

       \noindent As mentioned before, we are interested in the event $N_\uandv \ge N_\unov$ (and
        likewise $N_\uandv \ge N_\vnou$), in order to apply
        Theorem~\ref{thm:random-walk}.  Moreover, due to
        Lemma~\ref{lem:multi_vs_binom}, we know that $N_\uandv$ and
        $(N_\unov - 1)$ almost behave like independent random
        variables that follow $\Bin(n - 2, p)$ and $\Bin(n - 2, q)$,
        respectively.  The following lemma helps to bound the
        probability for $N_\uandv \ge N_\unov$.  Note that it gives a
        bound for the probability of achieving strict inequality
        (instead of just $\ge$), which accounts for the fact that
        $(N_\unov - 1)$ and not $N_\unov$ itself follows a binomial
        distribution.

	\begin{lemma}
        \label{lem:equality_of_binomial_distributions}
    	Let $n \in \N$ with $n \geq 2$, and let $p, q \in (0, \frac12]$ with $p \geq q$.
        Further, let $X \sim \Bin(n,p)$ and $Y \sim \Bin(n,q)$ be independent, let $d = \lfloor p(n + 1) \rfloor$, and let $d = \smallO{\sqrt{n}}$, then $\Pr{X > Y} \geq \big(\frac12 - 1/\sqrt{2 \circlePi d}\big) (1 - \smallO{1})$.
	\end{lemma}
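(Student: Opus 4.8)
The plan is to write $\Pr{X>Y}=\Pr{X\ge Y}-\Pr{X=Y}$, lower-bound $\Pr{X\ge Y}$ by $\frac12$ via \Cref{lem:dominating_binomial_distribution}, upper-bound $\Pr{X=Y}$ by the largest point mass of $X$ via \Cref{lem:maximum_of_a_binomial_distribution}, and finally estimate that point mass with Stirling's formula.

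Concretely, \Cref{lem:dominating_binomial_distribution} gives $\Pr{X\ge Y}\ge\frac12$. Since $X$ and $Y$ are independent, $\Pr{X=Y}=\sum_k\Pr{X=k}\Pr{Y=k}\le\bigl(\max_k\Pr{X=k}\bigr)\sum_k\Pr{Y=k}=\max_k\Pr{X=k}$, and by \Cref{lem:maximum_of_a_binomial_distribution} this maximum is attained at $d=\lfloor p(n+1)\rfloor$, so $\Pr{X=Y}\le\Pr{X=d}$. Hence $\Pr{X>Y}\ge\frac12-\Pr{X=d}$, and it remains to show $\Pr{X=d}\le\frac{1}{\sqrt{2\circlePi d}}(1+\smallO{1})$. (If $d=0$ the claimed bound is vacuous; otherwise $d\ge1$, and $d\le n-1$ since $d=\smallO{\sqrt n}$, and the Stirling lower bounds invoked below hold for every factorial appearing, the cases $d=1$ and $n-d=1$ being checked directly.)

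For the estimate, write $\Pr{X=d}=\binom{n}{d}p^d(1-p)^{n-d}$ and insert the sharp upper bound $n!<\sqrt{2\circlePi}\,n^{n+1/2}\eulerE^{-n}\eulerE^{1/(12n)}$ from \Cref{thm:stirling_approximation} together with the lower bounds $m!>\sqrt{2\circlePi}\,m^{m+1/2}\eulerE^{-m}$ for $m\in\{d,n-d\}$ from \Cref{cor:stirling_approximation}; after the $\eulerE^{-n}$ factors cancel this gives
\[ \Pr{X=d}<\frac{\eulerE^{1/(12n)}}{\sqrt{2\circlePi}}\sqrt{\frac{n}{d(n-d)}}\left(\frac{np}{d}\right)^{d}\left(\frac{n(1-p)}{n-d}\right)^{n-d}. \]
The trailing product is at most $1$: by $\ln x\le x-1$ its logarithm is at most $d\bigl(\frac{np}{d}-1\bigr)+(n-d)\bigl(\frac{n(1-p)}{n-d}-1\bigr)=(np-d)+(d-np)=0$, and this bound holds however far $d$ lies from $np$. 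Also $\eulerE^{1/(12n)}=1+\smallO{1}$ and $\sqrt{n/(d(n-d))}=d^{-1/2}(1-d/n)^{-1/2}=d^{-1/2}(1+\smallO{1})$, since $d=\smallO{\sqrt n}$ forces $d/n\to0$. Hence $\Pr{X=d}<\frac{1}{\sqrt{2\circlePi d}}(1+\smallO{1})$, so $\Pr{X>Y}>\frac12-\frac{1}{\sqrt{2\circlePi d}}(1+\smallO{1})$; because $\frac12-\frac{1}{\sqrt{2\circlePi d}}\ge\frac12-\frac{1}{\sqrt{2\circlePi}}>0$ for $d\ge1$, this is at least $\bigl(\frac12-\frac{1}{\sqrt{2\circlePi d}}\bigr)(1-\smallO{1})$, and for small $n$ the factor $(1-\smallO{1})$ can be chosen to render the bound trivial.

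I expect the delicate point to be squeezing the Stirling estimate tightly enough to land below $\frac{1}{\sqrt{2\circlePi d}}$: the cruder numerator bound $n!<\eulerE\,n^{n+1/2}\eulerE^{-n}$ of \Cref{cor:stirling_approximation} would only give $\Pr{X=d}<\frac{\eulerE}{2\circlePi\sqrt d}(1+\smallO{1})$, and $\frac{\eulerE}{2\circlePi}>\frac{1}{\sqrt{2\circlePi}}$, so that route narrowly overshoots. Besides using the sharp bound of \Cref{thm:stirling_approximation} as above, one could instead replace $\Pr{X>Y}=\Pr{X\ge Y}-\Pr{X=Y}$ by a stochastic-domination reduction to the i.i.d.\ case (couple $Y$ with $Y'\sim\Bin(n,p)$ independent of $X$ with $Y'\ge Y$, as in the proof of \Cref{lem:dominating_binomial_distribution}, so that $\Pr{X>Y}\ge\Pr{X>Y'}=\frac12\bigl(1-\Pr{X=Y'}\bigr)\ge\frac12-\frac12\Pr{X=d}$), which buys a factor $\frac12$ on the point mass and hence ample slack for the cruder bound. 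The rest---the Stirling rearrangement, the $\ln x\le x-1$ cancellation, and the bookkeeping of the $\smallO{1}$ terms, where the hypothesis $d=\smallO{\sqrt n}$ is exactly what kills $d/n$---is routine.
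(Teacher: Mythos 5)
Your proposal is correct and follows essentially the same route as the paper: reduce via \Cref{lem:dominating_binomial_distribution} and \Cref{lem:maximum_of_a_binomial_distribution} to bounding the mode $\Pr{X = d}$, then show $\Pr{X = d} \leq \frac{1}{\sqrt{2\circlePi d}}(1 + \smallO{1})$ by Stirling, with $d = \smallO{\sqrt{n}}$ absorbing the $(1 - d/n)$ correction. The only (immaterial) difference is mechanical: the paper bounds $\binom{n}{d} \leq n^d/d!$, replaces $p$ by its maximizer $d/n$, and applies Stirling only to $d!$, whereas you apply Stirling to all three factorials and cancel the exponential part via $\ln x \leq x - 1$.
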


	\begin{proof}
		By \Cref{lem:dominating_binomial_distribution}, we get $\Pr{X \geq Y} \geq \frac12$, and we bound
		\[
			\Pr{X > Y} = \Pr{X \geq Y} - \Pr{X = Y} \geq \frac12 -  \Pr{X = Y},
		\]
		leaving us to bound $\Pr{X = Y}$ from above.
        By independence of~$X$ and~$Y$, we get
		\begin{align}
            \label{eq:equal_binomial_variables}
			\Pr{X = Y} = \sum_{i \in [n]} \Pr{X = i} \cdot \Pr{Y = i}.
		\end{align}
        Note that, by \Cref{lem:maximum_of_a_binomial_distribution}, for all $i \in [0.. n] $, it holds that $\Pr{X = i} \leq \Pr{X = d}$.
        Assume that we have a bound $B$ such that $\Pr{X = d} \leq B$.
        Substituting this into \Cref{eq:equal_binomial_variables} yields
\[
            \Pr{X = Y} \leq B \sum_{i \in [n]} \Pr{Y = i} = B,
      \]
        resulting in $\Pr{X > Y} \geq \frac12 - B$.
        Thus, we now derive such a bound for~$B$ and apply the inequality that for all $x \in \R$, it holds that $1 + x \leq \eulerE^x$, as well as \Cref{eq:stirling_inequality_lowerbound}. We get
        
        \begin{align}
            \notag
            \binom{n}{d} p^d (1 - p)^{n - d} &\leq \frac{n^d}{d!} \left(\frac{d}{n}\right)^d \left(1 - \frac{d}{n}\right)^n \left(1 - \frac{d}{n}\right)^{-d}\\ \notag
            &\leq \frac{d^d}{d!} \eulerE^{-d} \left(1 - \frac{d}{n}\right)^{-d}\\ \notag
            &\leq \frac{d^d}{\sqrt{2\circlePi} d^{d + 1/2} \eulerE^{-d}} \eulerE^{-d} \left(1 - \frac{d}{n}\right)^{-d}\\
        \label{eq:equal_binomials_last_step}
            &= \frac{1}{\sqrt{2 \circlePi d}} \frac{1}{(1 - d/n)^d}.
        \end{align}
        By Bernoulli's inequality, we bound $(1 - d/n)^d \geq 1 - d^2/n = 1 - \smallO{1}$ by the assumption $d = \smallO{\sqrt{n}}$.
        Substituting this back into \Cref{eq:equal_binomials_last_step} concludes the proof.
    \end{proof}
Finally, in order to apply Theorem~\ref{thm:random-walk}, we have
to make sure not to end up in the special case where $a = b \le 1$,
i.e., we have to make sure that the common neighborhood includes at
least two vertices.  The probability for this to happen is given by
the following lemma.

\begin{lemma}
  \label{lem:prob-x-more-than-1}
  Let $X \sim \Bin(n, p)$ and let $c = pn \in \smallO{n}$.  Then it
  holds that
  $\Pr{X > 1} \ge \left(1 - \eulerE^{-c}\left(1 + c \right)\right)
  \left(1 - \smallO{1}\right)$.
\end{lemma}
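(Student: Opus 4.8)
The plan is to start from the exact identity
\[
  \Pr{X > 1} = 1 - (1-p)^n - np(1-p)^{n-1} = 1 - (1-p)^{n-1}\bigl(1 + (n-1)p\bigr),
\]
to compare the subtracted term with its Poisson analogue $\eulerE[-c](1+c)$, and then to show that the resulting multiplicative error is absorbed by the factor $(1-\smallO{1})$; note that $p = c/n = \smallO{1}$ since $c = \smallO{n}$. For the comparison I would use $1-p \le \eulerE[-p]$ and $\eulerE[p] \le 1/(1-p) = 1 + p/(1-p)$. The first, together with $(n-1)p \le c$, gives $(1-p)^{n-1} \le \eulerE[-p(n-1)] = \eulerE[-c]\eulerE[p]$ and hence $\Pr{X>1} \ge 1 - \eulerE[-c]\bigl(1 + c\,\eulerE[p]\bigr)$. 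The second gives $1 + c\,\eulerE[p] \le 1 + c + cp/(1-p) = (1+c)(1+\eta)$ with $\eta = \tfrac{cp}{(1-p)(1+c)} = \tfrac{c^2}{n(1+c)}\bigl(1+\smallO{1}\bigr)$. Since $\eulerE[-c](1+c) \le 1$, this reduces the claim to showing $\eta = \smallO{1}\cdot g(c)$, where $g(c) \coloneqq 1 - \eulerE[-c](1+c)$.

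To bound $\eta/g(c)$ I would split on the size of $c$. Observing that $g(c) = \Pr{\mathrm{Poisson}(c) \ge 2} \ge \Pr{\mathrm{Poisson}(c) = 2} = \tfrac{c^2}{2}\eulerE[-c]$, we get $g(c) \ge c^2/(2\eulerE)$ whenever $c \le 1$, so $\eta/g(c) \le \tfrac{2\eulerE}{n(1+c)}\bigl(1+\smallO{1}\bigr) = \smallO{1}$. On the other hand $g$ is increasing on $[0,\infty)$, since $g'(c) = c\,\eulerE[-c] \ge 0$, so for $c \ge 1$ we have $g(c) \ge g(1) = 1 - 2/\eulerE$; using $c^2/(1+c) \le c$ and $c = \smallO{n}$ then gives $\eta/g(c) \le \tfrac{c}{n(1-2/\eulerE)}\bigl(1+\smallO{1}\bigr) = \smallO{1}$. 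In both cases $\eta = \smallO{1}\cdot g(c)$, so $\Pr{X>1} \ge g(c)\bigl(1-\smallO{1}\bigr) = \bigl(1-\eulerE[-c](1+c)\bigr)\bigl(1-\smallO{1}\bigr)$.

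The one step needing care is the regime $c \to 0$: there $g(c) = 1 - \eulerE[-c](1+c)$ is itself $\bigTheta{c^2}$ and vanishes, so subtracting an additive $\smallO{1}$ error is not enough, and one must compare the error, which is of order $c^2/n$, against $c^2$; this is exactly what the quadratic lower bound $g(c) \ge \tfrac{c^2}{2}\eulerE[-c]$ (equivalently, the two-point Poisson bound $\Pr{\mathrm{Poisson}(c)\ge 2} \ge \Pr{\mathrm{Poisson}(c)=2}$) supplies. All remaining steps are routine estimates for the binomial probabilities and for $\eulerE[x]$, so I expect no further obstacle.
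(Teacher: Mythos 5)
Your proposal is correct and takes essentially the same route as the paper: both start from the exact expression $\Pr{X > 1} = 1 - (1-p)^n - np(1-p)^{n-1}$, apply $1 - x \le \mathrm{e}^{-x}$ to reach the intermediate bound $1 - \mathrm{e}^{-c}\left(1 + c\,\mathrm{e}^{c/n}\right)$, and then absorb the $\mathrm{e}^{c/n}$ factor into the multiplicative $(1-\smallO{1})$. The paper simply asserts this absorption from $\mathrm{e}^{c/n} \to 1$, whereas you justify it explicitly—in particular handling the regime $c \to 0$ via $1 - \mathrm{e}^{-c}(1+c) \ge \tfrac{c^2}{2}\mathrm{e}^{-c}$—which fills in a detail the paper leaves implicit.
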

\begin{proof}
  As $X > 1$ holds if and only if $X \not= 0$ and $X \not=1$, we get
  \begin{align*}
    \Pr{X > 1}
    &= 1 - \Pr{X = 0} - \Pr{X = 1}
    = 1 - (1 - p)^n - n \cdot p \cdot (1 - p)^{n - 1}.\\
    \intertext{Using that for all $x \in \R$ it holds that $1 - x \le \e^{-x}$, we get}
    \Pr{X > 1}
    &\ge 1 - \eulerE^{-pn} - n \cdot p \cdot \eulerE^{-p(n - 1)}\\
    &= 1 - \eulerE^{-c} - c \cdot \eulerE^{c/n} \cdot \eulerE^{-c}\\ 
    &= 1 - \eulerE^{-c}\left(1 + c \cdot \eulerE^{c/n}  \right).
  \end{align*}
  As $\eulerE^{c/n}$ goes to $1$ for $n\to\infty$, we get the claimed
  bound. 
\end{proof}

\subsection{Many Edges Have Large Common Regions}
\label{sec:many-edges-have}

In \Cref{subsec:neighborhood_region}, we derived a lower bound on the probability
that $N_\uandv \ge N_\unov$ provided that the probability for a
vertex to end up in the shared region $R_\uandv$ is sufficiently large~compared to $R_\unov$.  In the following, we estimate the
measures of these regions depending on the~distance between $u$ and
$v$.  Then, we give a lower bound on the
probability that $\mu(R_\uandv) \ge \mu(R_\unov)$.

	\begin{lemma}
		\label{lem:probability_of_px}
		Let $G \sim \mathcal{G}(n,r)$ be a random geometric graph with expected average degree $\avdegree$, let $\{u, v\} \in E$ be an edge, and let $\tau\coloneqq \frac{\dist(u, v)}{r}$. Then, 
		\begin{align}
			\label{eq:pa_computation} 
			\mu(R_\uandv) &= \frac{\avdegree}{(n-1)\circlePi} \left(2 \arccos \left(\frac{\tau}{2}\right) - \sin\left(2\arccos\left(\frac{\tau}{2}\right)\right)\right) \textrm{ and}\\
			\label{eq:pb_computation}
			\mu(R_\unov) = \mu(R_\vnou) &= \frac{\avdegree}{n-1} - \mu(R_\uandv).
		\end{align}
	\end{lemma}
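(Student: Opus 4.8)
Both identities are elementary area computations. The only inputs beyond plane geometry are the relation $\avdegree = (n-1)\circlePi r^2$ fixing the radius and the fact that on the unit torus a disk of radius~$r$ is isometric to a Euclidean disk of radius~$r$ (this holds as soon as $r$ is below the injectivity radius $1/2$, which is the case in our regime), so that its measure equals $\circlePi r^2$ and the intersection of two such disks is congruent to the planar intersection of two disks with the same centre distance. Granting \eqref{eq:pa_computation}, the second identity is then immediate: $R_\unov$ is the disk of~$u$ with the lens $R_\uandv$ removed, so $\mu(R_\unov) = \circlePi r^2 - \mu(R_\uandv) = \frac{\avdegree}{n-1} - \mu(R_\uandv)$, and by the symmetric roles of $u$ and $v$ the same value is obtained for $\mu(R_\vnou)$. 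Hence the work is entirely in \eqref{eq:pa_computation}.

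To prove \eqref{eq:pa_computation} I would compute the classical ``two equal circles'' lens area. Write $d \coloneqq \dist(u,v)$ and place the two disk centres at $(\pm d/2, 0)$. The circles meet at $(0, \pm\sqrt{r^2 - d^2/4})$, and the half-angle subtended at each centre by the common chord is $\phi \coloneqq \arccos(d/(2r)) = \arccos(\tau/2)$; since the edge $\{u,v\}$ exists we have $0 \le \tau \le 1$, hence $\phi \in [\pi/3,\pi/2]$ and the relevant circular segments are minor. The lens $R_\uandv$ is the union of the two circular segments that the circles cut off along this chord, and each such segment is a sector of angle $2\phi$ (area $r^2\phi$) minus the isosceles triangle spanned by the two bounding radii (area $\tfrac12 r^2\sin 2\phi$). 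This gives $\mu(R_\uandv) = 2\big(r^2\phi - \tfrac12 r^2\sin 2\phi\big) = r^2\big(2\arccos(\tau/2) - \sin(2\arccos(\tau/2))\big)$, and substituting $r^2 = \avdegree/((n-1)\circlePi)$ yields \eqref{eq:pa_computation}.

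No step here is genuinely hard; this lemma is bookkeeping rather than an obstacle. The two points that deserve a sentence each are the torus-to-plane reduction (a remark that, for the radii occurring here, disks behave exactly like Euclidean disks, so the standard lens formula applies verbatim and no boundary or wrap-around cases arise) and keeping the trigonometry tidy -- in particular recognising that the ``triangle'' term $\tfrac{d}{2}\sqrt{4r^2 - d^2}$ in the usual form of the lens formula equals $r^2\sin(2\arccos(\tau/2))$, via $\sin 2\phi = 2\sin\phi\cos\phi$ together with $\cos\phi = \tau/2$.
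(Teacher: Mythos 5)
Your proposal is correct and takes essentially the same route as the paper: the lens area is computed as twice (circular sector minus isosceles triangle) with half-angle $\arccos(\tau/2)$, followed by the substitution $r^2 = \avdegree/((n-1)\circlePi)$, and the second identity follows from $\mu(R_\uandv) + \mu(R_\unov) = \circlePi r^2$. Your explicit remark that, for the radii in question, a torus disk is isometric to a Euclidean disk is a detail the paper leaves implicit but is consistent with its setup.
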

		
		\begin{figure}
			\centering
			\begin{subfigure}[t]{0.22\textwidth}
				\begin{center}
					\includegraphics[width=0.95\textwidth]{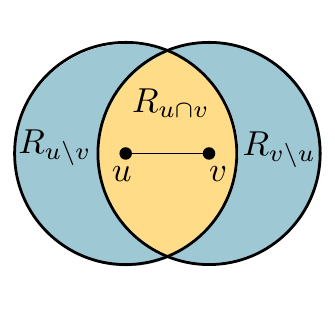}{\centering}
				\end{center}
				\subcaption{The geometric regions corresponding to the common and exclusive neighborhoods, respectively, with yellow illustrating $R_\uandv$ and blue illustrating $R_\unov$ and $R_\vnou$. }
				\label{fig:intersection_neighborhood}
			\end{subfigure} \hspace*{1em}
			\begin{subfigure}[t]{0.22\textwidth}
				\begin{center}
					\includegraphics[width=0.95\textwidth]{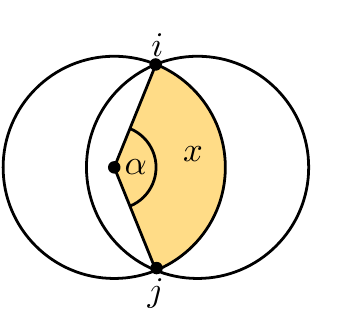}{\centering}
				\end{center}
				\subcaption{Let $\alpha$ be the central angle determined by the intersection points~$i$ and $j$, and let $x$ be the corresponding circular sector (illustrated in yellow). }
				\label{subfig:intersection_alpha}
			\end{subfigure} \hspace*{1em}
			\begin{subfigure}[t]{0.22\textwidth}
				\begin{center}
					\includegraphics[width=0.95\textwidth]{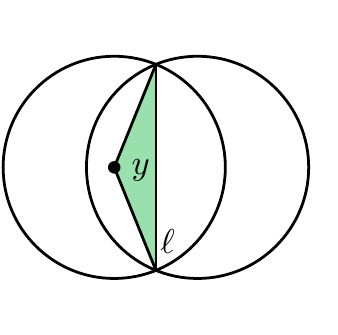}
				\end{center}
				\subcaption{Let $y$ be a triangle in the intersection (illustrated in green) determined by the radical axis $\ell$ and the central angle $\alpha$, cf. Fig~\ref{subfig:intersection_alpha}. }
				\label{subfig:intersection_trianglecircle}
			\end{subfigure}  \hspace*{1em}
			\begin{subfigure}[t]{0.22\textwidth}
				\begin{center}
					\includegraphics[width=0.375\textwidth]{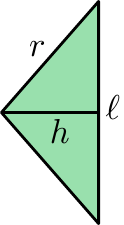}
				\end{center}
				\subcaption{The height~$h$ divides the area $\mu(y)$ (illustrated in green) of the triangle $y$, cf. Fig~\ref{subfig:intersection_trianglecircle}, into two subareas of equal size, since adjacent and opposite legs have the same length~$r$. }
				\label{subfig:intersection_triangle}
			\end{subfigure}
			\caption{The neighborhood of two adjacent vertices $u$ and $v$ in a random geometric graph.}
			\label{fig:intersection}
		\end{figure}
	
	\begin{proof}
		
		We start with proving \Cref{eq:pa_computation}. Let $i$ and $j$ be the two intersection points of the disks of $u$ and~$v$, let $\alpha$ be the central angle enclosed by $i$ and $j$, and let $x$ be the corresponding circular sector, cf. Fig~\ref{subfig:intersection_alpha}. Moreover, let the triangle~$y$ be a subarea of~$x$ determined by~$\alpha$ and the radical axis~$\ell$, cf. Fig~\ref{subfig:intersection_trianglecircle}. Let $h$ denote the height of the triangle $y$, cf. Fig~\ref{subfig:intersection_triangle}. For our calculations, we restrict the length of $\ell$ by the intersection points $i$ and $j$.
		Since we consider the intersection between disks and thus~$\ell$ divides the area $\mu(R_\uandv)$ into two subareas of equal sizes, it holds that $\mu(R_\uandv) = 2\left( \mu(x) - \mu(y)\right)$. Considering the two areas $\mu(x)$ and $\mu(y)$, it holds that
		\begin{align}
			\label{eq:area_a}
			\mu(x) = \frac{\alpha}{2}r^2 \quad \text{ and }\quad 	\mu(y) = h \cdot \frac{\ell}{2} = \cos \left(\frac{\alpha}{2}\right)r \cdot  \sin\left(\frac{\alpha}{2}\right)r = \frac{\sin(\alpha)}{2}r^2.
		\end{align}
		
		\noindent	For the central angle $\alpha$ we know $	\cos \left(\alpha/2\right) = h/r = \tau/2$ and therefore $\alpha = 2 \arccos \left(\frac{\tau}{2}\right)$. Together with \cref{eq:area_a}, we obtain
		\begin{align}
			\label{eq:pa}
			\mu(R_\uandv) = 2\left( \mu(x) - \mu(y)\right) = 2\ \left( \frac{2 \arccos \left(\frac{\tau}{2}\right)}{2}r^2 - \frac{\sin\left(2 \arccos \left(\frac{\tau}{2}\right)\right)}{2}r^2\right).
		\end{align}
		
		\noindent	The area of a general circle is equal to $\circlePi r^2$, the area of one disk in the random geometric graph equals $\frac{\avdegree}{n - 1}$, i.e., $ r^2 = \frac{\avdegree}{(n-1)\circlePi}$. Together with \cref{eq:pa}, we obtain \cref{eq:pa_computation}.

\Cref{eq:pb_computation}:  
		We get the claimed equality by noting that $\mu(R_\uandv) + \mu(R_\unov) = \circlePi r^2 $.	
	\end{proof}

	\begin{lemma}
          \label{lem:larger-shared-area-for-many-edges}
		Let $G \sim \mathcal{G}(n,r)$ be a random geometric graph, and let $\{u, v\} \in E$ be an edge. Then $\Pr{\mu(R_\uandv) \ge \mu(R_\unov)} \geq \left(\frac45\right)^2$.
	\end{lemma}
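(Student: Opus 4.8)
The plan is to reduce the claim to the one-dimensional quantity $\tau \coloneqq \dist(u,v)/r$, which lies in $[0,1]$ since $\{u,v\}$ is an edge. By \cref{eq:pb_computation} we have $\mu(R_\uandv) + \mu(R_\unov) = \avdegree/(n-1) = \circlePi r^2$, so the event $\mu(R_\uandv) \ge \mu(R_\unov)$ is equivalent to $\mu(R_\uandv) \ge \circlePi r^2/2$ --- the common region covers at least half of one disk. Plugging \cref{eq:pa_computation} in and using $\avdegree/((n-1)\circlePi) = r^2$, this becomes $f(\tau) \ge \circlePi/2$ with $f(t) \coloneqq 2\arccos(t/2) - \sin(2\arccos(t/2))$. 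Writing $\theta = \arccos(t/2)$, we have $f(t) = 2\theta - \sin 2\theta$, whose derivative $2(1 - \cos 2\theta)$ in $\theta$ is positive on $(\circlePi/3, \circlePi/2)$; since $\theta$ is strictly decreasing in $t$, the map $f$ is strictly decreasing on $[0,1]$, with $f(0) = \circlePi > \circlePi/2 > 2\circlePi/3 - \sqrt3/2 = f(1)$. Hence there is a unique threshold $\tau^* \in (0,1)$ with $f(\tau^*) = \circlePi/2$, and $\mu(R_\uandv) \ge \mu(R_\unov)$ holds exactly when $\tau \le \tau^*$.

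It then remains to identify the law of $\tau$ and to locate $\tau^*$. Conditioned on $\{u,v\}\in E$, translation invariance of the torus lets us fix~$u$ and take~$v$ uniform in the radius-$r$ disk around~$u$; since a disk of radius $\rho\le r$ has area $\circlePi\rho^2$, we get $\Pr{\dist(u,v)\le s} = (s/r)^2$ on $[0,r]$, i.e., $\Pr{\tau\le t} = t^2$ on $[0,1]$. Since $\mu(R_\uandv)$ and $\mu(R_\unov)$ depend on the positions only through $\tau$, we obtain
\[
  \Pr{\mu(R_\uandv)\ge\mu(R_\unov)} = \Pr{\tau\le\tau^*} = (\tau^*)^2 ,
\]
so the lemma follows once we show $\tau^* \ge 4/5$, equivalently (by monotonicity of $f$) that $f(4/5) \ge \circlePi/2$.

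Using $\sin(2\arccos(2/5)) = 2\cdot\tfrac25\sqrt{1-4/25} = \tfrac{4\sqrt{21}}{25}$, the inequality $f(4/5)\ge\circlePi/2$ rearranges to $\arccos(2/5) \ge \circlePi/4 + 2\sqrt{21}/25$. I expect this to be the main obstacle: both sides are $\approx 1.152$, and the slick lower bound $\arccos(2/5)\ge 11\circlePi/30$ coming from concavity of $\arccos$ (the chord through $(0,\circlePi/2)$ and $(1/2,\circlePi/3)$) already falls short by roughly $10^{-4}$, so the comparison must be turned into a genuinely rigorous estimate. I would do this by bounding $\arccos(2/5) = \circlePi/2 - \arcsin(2/5)$ via a truncated power series for $\arcsin$ together with an explicit geometric tail bound (all series coefficients are at most $1$), e.g.
\[
  \arcsin\!\left(\tfrac25\right) \le \tfrac25 + \tfrac16\left(\tfrac25\right)^{3} + \tfrac{3}{40}\left(\tfrac25\right)^{5} + \frac{(2/5)^{7}}{1-(2/5)^{2}} < \tfrac{\circlePi}{4} - \tfrac{2\sqrt{21}}{25},
\]
where the last step is checked with rational bounds such as $\sqrt{21} < 4.5826$ and $\circlePi/4 > 0.7853$. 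This yields $\tau^*\ge 4/5$ and hence $\Pr{\mu(R_\uandv)\ge\mu(R_\unov)} = (\tau^*)^2 \ge (4/5)^2$.
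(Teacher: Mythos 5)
Your proposal is correct and follows essentially the same route as the paper's proof: reduce the event $\mu(R_\uandv)\ge\mu(R_\unov)$ via Lemma~\ref{lem:probability_of_px} to the condition $2\arccos(\tau/2)-\sin\left(2\arccos(\tau/2)\right)\ge\circlePi/2$ on $\tau=\dist(u,v)/r$, and then use that, conditioned on $\{u,v\}\in E$, the distance satisfies $\Pr{\tau\le t}=t^2$, so the edges with $\tau\le 4/5$ contribute probability $(4/5)^2$. The only difference is that you make explicit the monotonicity of the area function and give a rigorous series-based verification that the threshold condition indeed holds at $\tau=4/5$, a numerical fact the paper simply asserts (with a sign typo, ``$\tau\ge\frac45$'' instead of $\tau\le\frac45$); your verification is sound.
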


	\begin{proof}
		Let $\tau = \frac{\dist(u, v)}{r}$. By \Cref{lem:probability_of_px} with $\mu(R_\uandv) \geq \mu(R_\vnou)$, we get
		\[
			\left(2 \arccos \left(\frac{\tau}{2}\right) - \sin\left(2\arccos\left(\frac{\tau}{2}\right)\right)\right) \geq \frac{\circlePi}{2},
		\]
		which is true for $\tau \geq \frac45$. The area of a disk of radius $\frac45 r$ is $\left(\circlePi(\frac45r)^2\right)/\left(\circlePi r^2\right) = \left(\frac45\right)^2$ times the area of a disk of radius $r$. Hence, the fraction of edges with distance at most $\frac45r$ is at least~$\left(\frac45\right)^2$, concluding the proof.
	\end{proof}

\subsection{Proof of Theorem~\ref{thm:final-result}}

By Theorem~\ref{lem:decision_tree}, the probability that a random edge
$\{u, v\}$ is monochrome is at least $\frac12 + \Pr{\decisiveness}/2$,
where $\decisiveness$ is the event that the common neighborhood of $u$
and $v$ is more decisive than each exclusive neighborhood.  It remains
to bound $\Pr{\decisiveness}$.

\subparagraph{Existence of an edge yields a large shared region.}

Let $\region$ be the event that $\mu(R_\uandv) \ge \mu(R_\unov)$.
Note that this also implies $\mu(R_\uandv) \ge \mu(R_\vnou)$ as
$\mu(R_\unov) = \mu(R_\vnou)$.  Due to the law of total probability,
we have
\begin{equation*}
  \Pr{\decisiveness} \ge
  \Pr{\region} \cdot \Pr{\decisiveness \mid \region}.
\end{equation*}
Due to Lemma~\ref{lem:larger-shared-area-for-many-edges}, we have $\Pr{\region} \geq  \left(\frac45\right)^2$.  By conditioning on $\region$ in the
following, we can assume that
$\mu(R_\uandv) \ge \frac{\avdegree}{2n} \ge \mu(R_\unov) = \mu(R_\vnou)$,
where $\avdegree$ is the expected average degree.

\subparagraph{Neighborhood sizes are roughly binomially distributed.}

The next step is to go from the size of the regions to the number of
vertices in these regions.  Each of the remaining $n' = n - 2$
vertices is sampled independently to lie in one of the regions
$R_\uandv$, $R_\unov$, $R_\vnou$, or $R_\nouv$.  Denote the resulting
numbers of vertices with $X_1$, $X_2$, $X_3$, and $X_4$, respectively.
Then $(X_1, X_2, X_3, X_4)$ follows a multinomial distribution with
parameter $\boldsymbol{p} = \left(p, q, q, 1-p-2q\right)$ for
$p = \mu(R_\uandv)$ and $q = \mu(R_\unov) = \mu(R_\vnou)$.  Note that
$N_\uandv = X_1$, $N_\unov = X_2 + 1$, and $N_\vnou = X_3 + 1$ holds
for the sizes of the common and exclusive neighborhoods, where the
$+1$ comes from the fact that $v$ is always a neighbor of $u$ and vice
versa.

We apply Lemma~\ref{lem:multi_vs_binom} to obtain independent
binomially distributed random variables $Y_1$, $Y_2$, and $Y_3$ that
are likely to coincide with $X_1 = N_\uandv$, $X_2 = N_\unov - 1$, and
$X_3 = N_\vnou - 1$, respectively.  Let $\binomial$ denote the event
that $(N_\uandv, N_\unov - 1, N_\vnou - 1) = (Y_1, Y_2, Y_3)$.  Again,
using the law of total probabilities and due to the fact that
$\region$ and $\binomial$ are independent, we get
\begin{equation*}
  \Pr{\decisiveness \mid \region} \ge
  \Pr{\binomial \mid \region}\cdot
  \Pr{\decisiveness \mid \region \cap \binomial} =
  \Pr{\binomial}\cdot
  \Pr{\decisiveness \mid \region \cap \binomial}.
\end{equation*}
Note that $p, q \le \frac{\avdegree}{n}$ for the expected average degree
$\avdegree$.  Thus, Lemma~\ref{lem:multi_vs_binom} implies that
$\Pr{\binomial} \ge \left(1 - 3\avdegree^2/n\right)$.
Conditioning on $\binomial$ makes it correct to assume that
$N_\uandv \sim \Bin(n', p)$, $(N_\unov - 1) \sim \Bin(n', q)$,
$(N_\vnou - 1) \sim \Bin(n', q)$ are independently distributed.
Additionally conditioning on $\region$ gives us
$p \ge \frac{\avdegree}{2n} \ge q$.

\subparagraph{A large shared region yields a large shared neighborhood.}

In the next step, we consider an event that makes sure that the number
$N_\uandv$ of vertices in the shared neighborhood is sufficiently
large.  Let $\neighborhood_1$, $\neighborhood_2$, and
$\neighborhood_3$ be the events that $N_\uandv \ge N_\unov$,
$N_\uandv \ge N_\vnou$, and $N_\uandv > 1$, respectively.  Let
$\neighborhood$ be the intersection of $\neighborhood_1$,
$\neighborhood_2$, and $\neighborhood_3$.  We obtain
\begin{align*}
  \Pr{\decisiveness \mid \region \cap \binomial}
  &\ge \Pr{\neighborhood\mid  \region \cap \binomial} \cdot
    \Pr{\decisiveness \mid \region \cap \binomial \cap \neighborhood}\\
  &\ge \Pr{\neighborhood_1\mid  \region \cap \binomial} \cdot
    \Pr{\neighborhood_2\mid  \region \cap \binomial} \cdot
    \Pr{\neighborhood_3\mid  \region \cap \binomial} \cdot
    \Pr{\decisiveness \mid \region \cap \binomial \cap \neighborhood},
\end{align*}
where the last step follows from
Lemma~\ref{lem:obvious-observation-on-conditional-probabilities} as
the inequalities in $\neighborhood_1$, $\neighborhood_2$, and
$\neighborhood_3$ all go in the same direction.  Note that
$N_\uandv \ge N_\unov$ is equivalent to $N_\uandv > N_\unov - 1$.  Due
to the condition on $\binomial$, $N_\uandv$ and $N_\unov - 1$ are
independent random variables following $\Bin(n', p)$ and
$\Bin(n', q)$, respectively, with $p \ge q$ due to the condition on
$\region$.  Thus, we can apply
Lemma~\ref{lem:equality_of_binomial_distributions}, to obtain
\begin{equation*}
  \Pr{\neighborhood_1\mid \region \cap \binomial} =
  \Pr{\neighborhood_2\mid \region \cap \binomial} \ge
  \frac{1}{2} - \frac{1}{\sqrt{2 \pi \lfloor \avdegree / 2\rfloor}(1 -
    \smallO{1})},
\end{equation*}
and Lemma~\ref{lem:prob-x-more-than-1} gives the bound
\begin{equation*}
  \Pr{\neighborhood_3\mid \region \cap \binomial} \ge
  1 - \eulerE^{-\avdegree/2} \left(1 + \frac{\avdegree}{2}\cdot (1 + \smallO{1})\right).
\end{equation*}
Note that both of these probabilities are bounded away from $0$ for
$\avdegree \ge 2$.
Conditioning on $\neighborhood$ lets us assume that the shared
neighborhood of $u$ and $v$ contains at least two vertices and that it
is at least as big as each of the exclusive neighborhoods.

\subparagraph{A large shared neighborhood yields high decisiveness.}

The last step is to actually bound the remaining probability
$\Pr{\decisiveness \mid \region \cap \binomial \cap \neighborhood}$.
Note that, once we know the number of vertices in the shared and
exclusive neighborhoods, the decisiveness no longer depends on
$\region$ or $\binomial$, i.e., we can bound
$\Pr{\decisiveness \mid \neighborhood}$ instead. For this, let
$\decisiveness_1$ and $\decisiveness_2$ be the events that
$D_\uandv > D_\unov$ and $D_\uandv > D_\vnou$, respectively. Note
that $\decisiveness$ is their intersection.  Moreover, due to
Lemma~\ref{lem:obvious-observation-on-conditional-probabilities}, we
have
$\Pr{\decisiveness \mid \neighborhood} \ge \Pr{\decisiveness_1 \mid
  \neighborhood} \cdot \Pr{\decisiveness_2 \mid \neighborhood}$.  To
bound
$\Pr{\decisiveness_1 \mid \neighborhood} = \Pr{\decisiveness_2 \mid
  \neighborhood}$, we use Theorem~\ref{thm:random-walk}.  Note
that the $b$ and $a$ in Theorem~\ref{thm:random-walk}
correspond to $N_\uandv$ and $N_\unov + 1$ (the~$+ 1$ coming from the
fact that $N_\unov$ does not count the vertex $v$).  Moreover
conditioning on $\neighborhood$ implies that $a \le b$ and $b > 1$.
Thus, Theorem~\ref{thm:random-walk} implies
$\Pr{\decisiveness_1 \mid \neighborhood} \ge \frac{3}{16}$.

\subparagraph{Conclusion.}

The above arguments gives us that the fraction of monochrome edges
is
\begin{equation*}
  \frac{1}{2} + \frac{\Pr{\decisiveness}}{2}
  \ge
  \frac{1}{2} + \frac{1}{2} \cdot
  \underbrace{\Pr{\region}}_{\ge \left(\frac{4}{5}\right)^2} \cdot
  \underbrace{\Pr{\binomial}}_{1 - \smallO{1}} \cdot
  \big(
  \underbrace{\Pr{\neighborhood_1 \mid \region \cap \binomial}}_{
    \ge \frac{1}{2} - \frac{1}{\sqrt{2 \pi \lfloor \avdegree / 2\rfloor}}
  }
  \big)^2 \cdot
  \underbrace{\Pr{\neighborhood_3 \mid \region \cap \binomial}}_{
    \ge 1 - \e^{-\avdegree/2} \left(1 + \frac{\avdegree}{2}\right)
  } \cdot
  \big(
  \underbrace{\Pr{\decisiveness_1 \mid \neighborhood}}_{
    \ge \frac{3}{16}
  }
  \big)^2,
\end{equation*}
where we omitted the $\smallO{1}$ terms for
$\Pr{\neighborhood_1 \mid \region \cap \binomial}$ and
$\Pr{\neighborhood_3 \mid \region \cap \binomial}$, as they are
already covered by the $1 + \smallO{1}$ coming from $\Pr{\binomial}$.
This yields the bound stated in Theorem~\ref{thm:final-result}:
\begin{equation*}
  \frac{1}{2} +
  \frac{9}{800} \cdot
  \left(\frac{1}{2} - \frac{1}{\sqrt{2 \pi \lfloor \avdegree / 2\rfloor}}\right)^2 \cdot
  \left(1 - \eulerE^{-\avdegree/2} \left(1 + \frac{\avdegree}{2}\right)\right) \cdot
  (1 - \smallO{1}).
\end{equation*}

\section{Erd{\H o}s--Rényi Graphs}\label{sec:erdosrenyi}
	In the following, we are interested in the probability that an edge $\{u, v\}$ is monochrome after the \shortmodel\ on Erd{\H o}s--Rényi graphs. In contrast to geometric random graphs, we prove an upper bound. To this end, we show that it is likely that the common neighborhood is empty and therefore $u$ and $v$ choose their types to be the predominant type in their exclusive neighborhood, which is $t^+$ and $t^-$ with probability $\frac12$, each.
	
	\begin{theorem}
		\label{Thm:ER}
		Let $G \sim \mathcal{G}(n,p)$ be an Erd{\H o}s--Rényi graph with expected average degree $\avdegree = \smallO{\sqrt{n}}$. The expected fraction of monochrome edges after the FSP is at most $\frac12 + \smallO{1}$.
	\end{theorem}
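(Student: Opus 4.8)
The plan is to show that, conditioned on $\{u,v\}$ being an edge, its common neighborhood is empty with probability $1-\smallO{1}$, and that on this event $\{u,v\}$ is monochrome with probability exactly $\frac12$. Combined with the trivial bound of $1$ on the complementary event, this gives $\Pr{\monochrome \mid \{u,v\} \in E} \le \frac12 + \smallO{1}$, where $\monochrome$ denotes the event that $\{u,v\}$ is monochrome. As in the proof of \Cref{thm:final-result}, we identify the expected fraction of monochrome edges with the probability that a uniformly random edge is monochrome, which by symmetry equals $\Pr{\monochrome \mid \{u,v\}\in E}$ for an arbitrary fixed pair $\{u,v\}$.

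First I would record the structural fact that drives the argument: in the \shortmodel, if $m^+_w$ and $m^-_w$ denote the numbers of neighbors of a vertex $w$ of type $t^+$ and $t^-$, then after the process $w$ has type $t^+$ if $m^+_w > m^-_w$, type $t^-$ if $m^-_w > m^+_w$, and an independent fair coin decides if $m^+_w = m^-_w$. The only thing to check is that ``$w$'s type agrees with more than $\delta_w/2$ of its neighbors'' is the same as ``$w$'s type is the strict plurality'' (and the symmetric statement for flipping), together with the tie case. This makes $w$'s post-process type independent of $w$'s own initial type and, since the initial type assignment is invariant under a global flip of all types, makes it $t^+$ and $t^-$ with probability $\frac12$ each.

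Next, suppose $\{u,v\}\in E$ and $N_\uandv = 0$, i.e., $N_u \cap N_v = \emptyset$. Then $N_u$ and $N_v$ are disjoint vertex sets (a common vertex would lie in $N_u\cap N_v$, while $u\notin N_u$ and $v\notin N_v$), so by the above the post-process type of $u$ is a randomized function of the initial types on $N_u$ and that of $v$ is a randomized function of the initial types on $N_v$, using independent tie-breaking coins. Since the initial types are i.i.d.\ and independent of the graph, the two post-process types are independent and each uniform on $\{t^+,t^-\}$, whence $\Pr{\monochrome \mid \{u,v\}\in E,\; N_\uandv = 0} = \frac12$.

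Finally I would bound $\Pr{N_\uandv \ge 1 \mid \{u,v\}\in E}$. Conditioned on $\{u,v\}$ being an edge, each of the other $n-2$ vertices is adjacent to both $u$ and $v$ independently with probability $p^2$, so $N_\uandv \sim \Bin(n-2, p^2)$, and a union bound gives $\Pr{N_\uandv \ge 1} \le (n-2)p^2 \le np^2 = \bigO{\avdegree^2/n}$ via $p = \avdegree/(n-1)$, which is $\smallO{1}$ by the assumption $\avdegree = \smallO{\sqrt{n}}$. Putting the pieces together,
\[
  \Pr{\monochrome \mid \{u,v\}\in E}
  \;\le\; \tfrac12\,\Pr{N_\uandv = 0 \mid \{u,v\}\in E} + \Pr{N_\uandv \ge 1 \mid \{u,v\}\in E}
  \;\le\; \tfrac12 + \smallO{1}.
\]
There is no genuine difficulty here; the only place that needs care is the structural fact of the second paragraph, since it is exactly what decouples the fates of $u$ and $v$, together with the (routine, already used in \Cref{thm:final-result}) identification of the expected monochrome fraction with the monochrome probability of a fixed edge.
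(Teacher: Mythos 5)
Your proposal is correct and follows essentially the same route as the paper: condition on whether the common neighborhood is empty, argue that disjoint neighborhoods make the post-process types of $u$ and $v$ independent fair coins (giving probability exactly $\frac12$), and bound $\Pr{N_\uandv \ge 1}$ by $np^2 = \smallO{1}$ using the binomial distribution of the common neighborhood size. Your extra care with the $\Bin(n-2,p^2)$ count and the explicit justification of the independence/uniformity of the post-process types only makes explicit what the paper states more briefly.
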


\begin{proof}
Given an edge $\{u,v\}$, let $\monochrome$ be the event that $\{u,v\}$ is monochrome. We first split~$\monochrome$ into disjoint sets with respect to the size of the common neighborhood and apply the law of total probability and get
	\begin{align*}
		\Pr{\monochrome} 	=\ & \Pr{\monochrome}[N_\uandv= 0] \cdot \Pr{N_\uandv= 0} + \Pr{\monochrome}[N_\uandv> 0] \cdot \Pr{N_\uandv> 0} \\
		 \leq\ & \Pr{\monochrome}[N_\uandv= 0] \cdot 1 + 1 \cdot \Pr{N_\uandv> 0} .
	\end{align*}
We bound each of the summands separately. 
For estimating $\Pr{\monochrome}[N_\uandv= 0]$, we note that the types of $u$ and~$v$ are determined by the predominant type in disjoint vertex sets.~By definition of the \shortmodel\ this implies that the probability of an monochrome edge is equal to~$\frac12$.

We are left with bounding $\Pr{N_\uandv > 0}$. Note that $N_\uandv \sim \Bin\left(n,p^2\right)$.
	Thus, by Bernoulli's inequality we get $\Pr{N_\uandv > 0} = 1 - \Pr{N_\uandv = 0} = 1 - \left(1-p^2\right)^n  \leq np^2$. Noting that $np^2 = \smallO{1}$ holds due to our assumption on $\avdegree$, concludes the proof.
\end{proof}

\newpage

\bibliography{References}

\end{document}